\documentclass[a4paper,11pt]{amsart}
\AtBeginDvi{}
\usepackage{amsmath,amssymb,amsthm,extarrows,mathrsfs}
\usepackage{url,here}
\usepackage[dvipdfmx]{graphicx}
\usepackage{listings}
\usepackage[all]{xy}

\DeclareMathOperator{\kak}{Ker}

\DeclareMathOperator{\Hom}{Hom}

\DeclareMathOperator{\spec}{Spec}
\DeclareMathOperator{\proj}{Proj}

\newcommand{\hifunc}{\mathcal{H}\mathrm{ilb}_{n}^{P}}
\newcommand{\hihilb}{\mathrm{Hilb}_{n}^{P}}

\newcommand{\hgrfunc}{\mathcal{G}\mathrm{r\text{\rm \"o}b}_{\prec}^{J}}
\newcommand{\hgrhilb}{\mathrm{Gr\text{\rm \"o}b}_{\prec}^{J}}

\newcommand{\bbfunc}{\mathcal{BB}^{J}_{\omega}}
\newcommand{\bbhilb}{\mathrm{BB}^{J}_{\omega}}

\newcommand{\grass}{\mathrm{G}\left(\binom{n+r}{r} - P(r), S_r\right)}

\DeclareMathOperator{\mpn}{\mathcal{M}_{P,n}}
\DeclareMathOperator{\ipn}{\mathbf{I}_{P,n}}
\DeclareMathOperator{\gm}{\mathbb{G}_m}

\DeclareMathOperator{\cd}{\mathscr{C}(\Delta)}

\DeclareMathOperator{\alg}{\mathchar`-Alg}

\DeclareMathOperator{\cset}{Set}

\DeclareMathOperator{\ini}{in}

\newcommand{\gr}{Gr\"obner}
\newcommand{\plu}{Pl\"ucker}
\newcommand{\bb}{Bia{\l}ynicki-Birula}

\newcommand{\mon}{\mathbb{N}^{n+1}}
\newcommand{\prn}{\mathbb{P}^n_k}
\newcommand{\prnn}{\mathbb{P}^N_k}

\theoremstyle{plain}
\newtheorem{theorem}{Theorem}[section]
\newtheorem{proposition}{Proposition}[section]
\newtheorem{corollary}{Corollary}[section]
\newtheorem{lemma}{Lemma}[section]

\theoremstyle{definition}
\newtheorem{definition}{Definition}[section]
\newtheorem{example}{Example}[section]

\title[Computable BB decomposition of the Hilbert scheme]{Computable Bia{\l}yniski-Birula decomposition of the Hilbert scheme}
\author{Yuta Kambe}
\date{}
\address{Graduate School of Science and Engineering, Saitama University, Japan}
\email{y.kambe.021@ms.saitama-u.ac.jp}
\subjclass[2010]{Primary~14C05, 13P10, 13F20}
\keywords{Hilbert scheme, \bb\ decomposition, \gr\ bases}
\thanks{This work was supported by JSPS KAKENHI [Grant Number 18J12368]}

\begin{document}
\maketitle

\begin{abstract}
We call the scheme parameterizing homogeneous ideals with fixed initial ideal the \gr\ scheme. We introduce a \bb\ decomposition of the Hilbert scheme $\hihilb$ for any Hilbert polynomial $P$ such that the cells are the \gr\ schemes in set-theoretically. Then we obtain a computable homology formula for smooth Hilbert schemes. As a corollary of our argument, we show that the \gr\ scheme for a monomial ideal defining a smooth point in the Hilbert scheme is smooth.
\end{abstract}

\section{Introduction}
Let $k$ be a field. Our main target is the Hilbert scheme $\hihilb$ parameterizing closed subschemes in $\prn$ with Hilbert polynomial $P$. This paper is organized as follows.

Let $\prec$ be an arbitrary monomial order on $S = k[x_0, \ldots, x_n]$. We see a set-theoretically decomposition of the Hilbert scheme $\hihilb$ into the loci of homogeneous ideals with fixed initial ideal (Proposition \ref{prop.decompHilb}). We denote by $\hgrhilb$ such a locus for a monomial ideal $J$ and we call $\hgrhilb$ the \textit{\gr\ scheme}. Namely, in set-theoretically,
\[ \hgrhilb = \{ I \subset S \mid \text{$I$ is a homogeneous ideal and}\ \ini_{\prec} I = J \}. \]
We denote by $\mpn$ the set of monomial ideals appearing in the decomposition:
\[ \hihilb = \coprod_{J \in \mpn} \hgrhilb. \eqno (\ast)\]
This decomposition comes from the closed embedding into the Grassmannian. We show that this decomposition is computable (Proposition \ref{prop.trunc_of_sat}).

We see examples of decompositions of smooth Hilbert schemes (Example \ref{ex.betti_of_pts}, Example \ref{ex.m+1}, Example \ref{ex.2t+1}). Those examples give us symmetrical numbers. In fact, the numbers are the Betti numbers of the Hilbert scheme. A purpose of this paper is to explain this phenomenon.

\begin{table}[H]
\begin{tabular}{|c||ccccccccccc|} \hline
 $d \setminus m$ & 0 & 1 & 2 & 3 & 4 & 5 & 6 & 7 & 8 & 9 & 10\\ \hline \hline
1 & 1 & 1 & 1 & & & & & & & & \\ \hline
2& 1& 2& 3& 2& 1& & & & & & \\ \hline
3& 1& 2& 5& 6& 5& 2& 1& & & & \\ \hline
4& 1& 2& 6& 10& 13& 10& 6& 2& 1& & \\ \hline
5& 1& 2& 6& 12& 21& 24& 21& 12& 6& 2& 1\\ \hline
\end{tabular}
\caption{The numbers of $J \in \mathcal{M}_{d,2}$ such that $\hgrhilb \cong \mathbb{A}^m_k$ \label{tab.int_betti_num}}
\end{table}

We show that the decomposition $(\ast)$ is the set-theoretically \bb\ decomposition \cite{BB73,BB76} with respect to a $\gm$-action compatible with $\prec$. Therefore we obtain a computable homology formula (Corollary \ref{cor.homology}). As a corollary of our argument, we show that the \gr\ scheme $\hgrhilb$ for $J \in \mpn$ is smooth if $\hihilb$ is smooth at $\proj S/J \in \hihilb(k)$ (Corollary \ref{cor.smoothness}). The same statement is known for segment monomial ideals which are monomial ideals with a combinatorial condition since the \gr\ scheme for a segment monomial ideal is an open subscheme of $\hihilb$ as the marked family \cite{CLMR11}. A remarkable point is that our theorem is provided in the context of geometries.

\begin{theorem}\label{thm.introsmooth}
If $\hihilb$ is smooth at $\proj S/J \in \hihilb(k)$ for $J \in \mpn$, then the \gr\ scheme $\hgrhilb$ is isomorphic to an affine space.
\end{theorem}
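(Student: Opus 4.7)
The plan is to leverage two facts from the paper: (i) the decomposition $(\ast)$ is the set-theoretic \bb\ decomposition of $\hihilb$ with respect to a $\gm$-action compatible with $\prec$, whose fixed points are exactly the monomial ideals $J \in \mpn$; and (ii) the classical \bb\ theorem describes the attracting cell at a smooth fixed point of a $\gm$-variety as an affine space. Combining these with the hypothesis that $\hihilb$ is smooth at $J$ should directly produce the conclusion.

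First I would reduce to the smooth setting. Let $U \subset \hihilb$ be the smooth locus. It is open, and since smoothness is preserved by the $\gm$-automorphisms, it is $\gm$-invariant; by hypothesis $J \in U$. For any $I \in \hgrhilb$ one has $\lim_{t \to 0} t \cdot I = J \in U$, so openness and $\gm$-invariance of $U$ force $I \in U$ as well. Hence the whole cell $\hgrhilb$ lies inside the smooth open subscheme $U$, allowing us to invoke results that require ambient smoothness.

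Next I would apply the \bb\ theorem on $U$: the attracting locus at the isolated fixed point $J$ is a locally closed subscheme of $U$ isomorphic to the positive-weight subspace of $T_J U = T_J \hihilb$, which is an affine space $\mathbb{A}^m_k$ with $m$ equal to the number of positive tangent weights of the $\gm$-action at $J$. Set-theoretically this attracting locus coincides with $\hgrhilb$ by (i), so it only remains to compare scheme structures.

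The main obstacle is exactly this comparison. The structure on $\hgrhilb$ used in the paper comes from the initial-ideal subfunctor of $\hifunc$ via the \plu\ embedding, whereas the attracting locus carries the attractor scheme structure from the \bb\ construction. I would identify the two by showing that both represent the same subfunctor of $U$, namely the functor whose $T$-points are $T$-flat families $I$ over $T$ with $\lim_{t \to 0} t \cdot I = J \times T$; the $\gm$-equivariant universal flat family over $\hgrhilb$ previously constructed in the paper should supply one direction, and smoothness of $U$ at $J$ rules out any nilpotent ambiguity that would otherwise obstruct the reverse direction. Once this identification is established, the isomorphism $\hgrhilb \cong \mathbb{A}^m_k$ is immediate.
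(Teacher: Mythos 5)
Your first two steps are sound and essentially reproduce the paper's own route to smoothness of the \bb\ scheme: the smooth locus $U \subset \hihilb$ is open and $\gm$-invariant, your limit argument places the whole attracting set of $J$ inside $U$, and by \cite[Corollary 7.3]{JS18} (Theorem \ref{thm.smoothness_on_BB}, together with Proposition \ref{prop.open_immersion}) the attractor of the smooth scheme $U$ is an affine fibre bundle over its fixed locus; this is exactly the content of Proposition \ref{prop.local_smoothness_of_BB}, and it even gives $\bbhilb \cong \mathbb{A}^m_k$. The genuine gap is your third step, which you rightly flag as the main obstacle but then do not carry out. You propose to show that $\hgrfunc$ and $\bbfunc$ are the \emph{same subfunctor}; however, the paper establishes their agreement only on field-valued points (Theorem \ref{thm.bb_is_grob}), and for a general $k$-algebra $A$ the inclusion $\bbfunc(A) \subset \hgrfunc(A)$ is precisely what is unavailable: an $A$-point of the attractor is a $\gm$-equivariant family over $\mathbb{A}^1_A$, and there is no reason its fibre at $1$ admits a reduced \gr\ basis over $A$ with initial ideal $J \otimes_k A$ --- as the paper itself warns in the introduction, ideals over rings need not admit reduced \gr\ bases at all. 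The \gr\ degeneration supplies only the opposite inclusion $\hgrfunc(A) \subset \bbfunc(A)$.

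Your proposed fix, that ``smoothness of $U$ at $J$ rules out any nilpotent ambiguity,'' is not an argument. What you actually have after steps 1--2 is a monomorphism $\hgrhilb \rightarrow \bbhilb$ (both map monomorphically to $\hihilb$, by Proposition \ref{prop.locally_closed} and \cite[Proposition 1.4.11]{Dri13}) which is bijective on $K$-points for every field extension $k \subset K$ and has smooth target; this does not imply it is an isomorphism. For instance, $(\mathbb{A}^1_k \setminus \{0\}) \sqcup \{0\} \rightarrow \mathbb{A}^1_k$ is a monomorphism of smooth schemes, bijective on field-valued points, and not an isomorphism. The paper closes exactly this gap with two ingredients absent from your sketch: Lemma \ref{lem.dimensions}, which extracts the dimension inequality $\dim \mathcal{O}_G \geq \dim \mathcal{O}_B$ from bijectivity on field-valued points alone (via Cohen's structure theorem and a strictly increasing chain of prime ideals), and the cone structure of the \gr\ scheme, namely the closed embedding $\hgrhilb \hookrightarrow T_G$ into its Zariski tangent space at $J$ \cite{FR09,RT10}. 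Combined with the injection $T_G \hookrightarrow T_B$ coming from the monomorphism and with $\dim_k T_B = \dim \mathcal{O}_B$ from smoothness of $\bbhilb$ at $\proj S/J$, this forces $\dim \mathcal{O}_G \leq \dim_k T_G \leq \dim_k T_B = \dim \mathcal{O}_B \leq \dim \mathcal{O}_G$, whence $\hgrhilb = T_G$ is an affine space (Theorem \ref{thm.smoothness_on_gr}). Note the paper never asserts $\hgrhilb \cong \bbhilb$ as schemes; to rescue your route you would have to prove the functor equality over arbitrary $A$, a substantial claim about reduced \gr\ bases over rings which smoothness does not provide, or else fall back on a dimension-and-tangent-space argument like the paper's.
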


\begin{theorem}
Assume that $k$ is algebraically closed, $\mathrm{char}\ k = 0$ and the Hilbert scheme $\hihilb$ is smooth. Denote by $p(J)$ the dimension of the Zariski tangent space on $\hgrhilb$ at $J$. Then we have the following formula about the homology of $\hihilb$: 
\[ H_m(\hihilb, \mathbb{Z}) \cong \bigoplus_{J \in \mpn} H_{m-2p(J)}(\{J\}, \mathbb{Z}) \cong \mathop{\bigoplus_{J \in \mpn}}_{2p(J) = m} \mathbb{Z} \]
for any integer $m$ with $0 \leq m \leq 2 \dim \hihilb$. 
\end{theorem}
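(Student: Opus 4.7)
The plan is to combine Theorem \ref{thm.introsmooth} with the classical \bb\ cell-decomposition theorem for smooth projective varieties. Since $\hihilb$ is smooth everywhere, I first apply Theorem \ref{thm.introsmooth} at every $J \in \mpn$: this gives $\hgrhilb \cong \mathbb{A}^{d_J}_k$ for some $d_J \geq 0$, and because an affine space is smooth at every point, $d_J$ must agree with the Zariski tangent space dimension $p(J)$. Hence each stratum of the decomposition $(\ast)$ is scheme-theoretically an affine space of dimension $p(J)$.

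Next, I invoke the paper's earlier identification of $(\ast)$ with the set-theoretic \bb\ decomposition $\hihilb = \coprod_{J} X_J^{+}$ attached to a compatible $\gm$-action whose fixed locus is exactly the finite set $\mpn$. Since $\hihilb$ is projective and smooth, Bia{\l}ynicki-Birula's theorem \cite{BB73,BB76} provides a scheme-theoretic decomposition in which each $X_J^{+}$ is a locally closed smooth subvariety isomorphic to $\mathbb{A}^{\dim T_J^{+} \hihilb}_k$. Because $X_J^{+}$ and $\hgrhilb$ have the same underlying set and both are smooth irreducible affine spaces over $k$, they coincide in dimension, so each BB cell is an affine space of dimension $p(J)$.

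Since $k$ is algebraically closed of characteristic zero, the Lefschetz principle reduces the homology computation to $k = \mathbb{C}$ and lets me pass to the analytification. A smooth projective complex variety equipped with a \bb\ decomposition into affine cells carries an algebraic cell decomposition whose associated CW structure has cells only in even real dimensions; the cellular chain complex has zero differential, so the integral homology is free, concentrated in even degrees, with $H_{2d}$ the free abelian group on the cells of complex dimension $d$. Counting the cells $X_J^{+}$ with $2p(J) = m$ yields exactly the stated formula.

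The chief technical point is the equality $\dim X_J^{+} = p(J)$: a priori the BB theorem only delivers $\dim X_J^{+} = \dim T_J^{+} \hihilb$, the positive-weight part of $T_J \hihilb$ under the $\gm$-action, whereas $p(J)$ is defined intrinsically on $\hgrhilb$. Under the smoothness hypothesis this is automatic, since the two supports coincide and both schemes are smooth affine spaces of the same dimension; without smoothness it would require a careful comparison of $T_J \hgrhilb$ with $T_J^{+} \hihilb$ inside $T_J \hihilb$, which is presumably the delicate matter resolved earlier in the paper en route to Theorem \ref{thm.introsmooth}.
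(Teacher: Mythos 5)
Your proof is correct and takes essentially the same route as the paper: the paper likewise reduces everything to showing that each BB cell $\bbhilb$ is isomorphic to $\mathbb{A}^{p(J)}_k$ (via its smoothness results and the set-theoretic identification of Theorem \ref{thm.bb_is_grob}), and then applies the standard homology theorem for a filtrable decomposition into affine cells, namely Proposition \ref{prop.filtrable} together with \cite[II, Theorem 4.4, Corollary 4.15]{BBCM02}. The only loose point in your write-up is the appeal to an ``associated CW structure'' with zero differentials --- an algebraic cell decomposition does not literally induce a CW complex whose open cells are the algebraic cells, and the rigorous statement is exactly the filtrable-decomposition result the paper cites --- but since you invoke this as a known theorem rather than prove it, the argument stands.
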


Sernesi construct a singular point of $\mathrm{Hilb}^{2t+2}_{3}$ defined by a monomial ideal \cite{Ser06}. As an example of the decomposition of a non-smooth Hilbert scheme, we compute the decomposition of the Hilbert scheme $\mathrm{Hilb}^{2t+2}_{3}$ with respect to $\prec_{lex}$ and $\prec_{rvlex}$ (Example \ref{ex.decomp_2t+2}). Then, by applying Theorem \ref{thm.introsmooth}, we find $18$ singular points of $\mathrm{Hilb}^{2t+2}_{3}$ defined by monomial ideals.\\

We note background. The \gr\ scheme, or also called the \textit{\gr\ stratum}, is introduced in \cite{NS00}. The \gr\ scheme is an affine scheme of finite type over $k$ and has computable defining equations \cite{Rob09, RT10, Led11}. Moreover, the \gr\ scheme $\hgrhilb$ is isomorphic to an affine space if $\hgrhilb$ is smooth at $J \in \hgrhilb(k)$ \cite[Corollary 3.7]{Rob09}, \cite[Corollary 3.6]{RT10} (see also \cite{FR09}). Precisely we define the \gr\ scheme $\hgrhilb$ as the scheme representing the following \textit{\gr\ functor}:
\[ \hgrfunc : (k\alg) \rightarrow (\cset) \]
\[ A \mapsto \left\{ G \subset A[x_0,\ldots, x_n] \left| \begin{aligned} &\text{$G$ is a reduced \gr\ basis consisting of}\\ &\text{homogeneous polynomials, $\ini_{\prec} \langle G \rangle = J \otimes_k A$} \end{aligned} \right. \right\}.\]
See \cite{Wib07} for definition of reduced \gr\ bases that coefficients are in a ring. Sometime a property of \gr\ schemes is not compatible with the Hilbert scheme as schemes since a general ideal in $A[x_0, \ldots, x_n]$ may not have a reduced \gr\ basis. However, the \gr\ scheme $\hgrhilb$ is a locally closed subscheme of $\hihilb$ if $J \in \mpn$ \cite[Theorem 5.3]{LR16}.

The \textit{\gr\ deformation} or the \textit{\gr\ degeneration}, given in \cite{Bay82}, certainly exists for any element $G \in \hgrfunc(A)$ in case $A$ is an arbitrary commutative ring. That is the flat family of closed subschemes $\{ Y_t \}$ in $\prn \times_k \spec A$ over $\mathbb{A}^1_k \times_k \spec A$ such that letting $I = \langle G \rangle$, general fibers are isomorphic to $\proj A[x_0,\ldots, x_n]/I$ and the special fiber at $\{0\} \times_k \spec A$ is isomorphic to $\proj A[x_0, \ldots, x_n]/(\ini_{\prec} I)$. The key point is that the \gr\ degeneration is provided as the orbit of $I$ with the limit $\ini_{\prec} I$ by a $\gm$-action on the polynomial ring compatible with $\prec$.

On the other hand, \bb\ introduces significant loci in a scheme $X$ with a $\gm$-action. Nowadays these are called \textit{\bb\ cells} or \textit{\bb\ schemes}. For simplicity, we assume that $X$ is smooth projective over an algebraically closed field $k$ and has finite fixed points $X^{\gm} = \{a_1, \ldots, a_r \}$. Therefore any orbit of $x \in X$ has a limit $\lim_{t \rightarrow 0} t \cdot x \in X^{\gm}$ by the extension of the orbit morphism $\gm \rightarrow X ; t \mapsto t \cdot x$ to $t = 0$. The \bb\ scheme $X_i^{+}$ is defined as
\[ X_i^{+} = \{ x \in X \mid \lim_{t \rightarrow 0} t \cdot x = a_i \}. \]
The \bb's theorem gives us that any $X_i^{+}$ is isomorphic to an affine space and $\{ X_i^{+} \}$ gives a cell decomposition of $X$ \cite{BB73,BB76}. Recently, for an arbitrary $X$ locally of finite type, the BB schemes have been defined and investigated in \cite{Dri13, JS18}. Thanks to \cite{Dri13, JS18}, we combine Bayer's degeneration and \bb's idea on the Hilbert scheme $\hihilb$.

\begin{theorem}
{\rm (Proposition \ref{prop.fixed_monomial}, Theorem \ref{thm.bb_is_grob}).} There exists a $\gm$-action on $\hihilb$ such that the scheme of fixed points is $(\hihilb)^{\gm} = \{ \proj S/J \mid J \in \mpn \}$ in set-theoretically and the BB scheme $(\hihilb)_{J}^{+}$for $\proj S/J$ is $\hgrhilb$ in set-theoretically.
\end{theorem}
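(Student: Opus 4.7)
The plan is to produce an integer weight vector $\omega=(\omega_0,\ldots,\omega_n)\in\mathbb{N}^{n+1}$ that simultaneously (a)~refines $\prec$ on all monomials of degree bounded by the Gotzmann number of $P$, and (b)~separates every pair of distinct such monomials. Condition (a) can be arranged by the standard argument of Bayer, since it constrains only finitely many pairs of exponents; once (a) holds, a small perturbation of $\omega$ secures (b) without disturbing (a). Declare the $\gm$-action on $S$ by $t\cdot x_i=t^{\omega_i}x_i$. Because the weights are non-negative and the action is linear on each graded piece, it descends to $\prn$ and, functorially on families of closed subschemes, to an action on $\hihilb$.

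For the fixed-point computation, a point $\proj S/I\in\hihilb(k)$ is $\gm$-fixed iff the saturated ideal $I$ is preserved by the action, iff its homogeneous pieces in the degrees that determine the Hilbert-scheme class are spanned by $\omega$-homogeneous polynomials. By condition (b), an $\omega$-homogeneous polynomial in these degrees is a scalar multiple of a monomial, so $I$ agrees there with a monomial ideal and thus $\proj S/I=\proj S/J$ for a unique $J\in\mpn$. Conversely every $J\in\mpn$ is manifestly preserved, giving $(\hihilb)^{\gm}=\{\proj S/J\mid J\in\mpn\}$ set-theoretically.

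For the BB cells, let $I$ be a saturated homogeneous ideal with $\proj S/I\in\hihilb(k)$, and consider the orbit morphism $\gm\to\hihilb$, $t\mapsto t\cdot[\proj S/I]$. Bayer's degeneration produces a flat family over $\mathbb{A}^1_k$ extending this orbit, whose special fiber at $t=0$ is $\proj S/\ini_\omega I$; by properness of $\hihilb$ the extension of the orbit morphism is unique, so $\lim_{t\to 0} t\cdot[\proj S/I]=[\proj S/\ini_\omega I]$. Condition (a) ensures $\ini_\omega I=\ini_\prec I$ in the degrees that determine the Hilbert-scheme point, so invoking the BB framework of \cite{Dri13, JS18} yields $(\hihilb)_{J}^{+}(k)=\{[\proj S/I] : \ini_\prec I=J\}=\hgrhilb(k)$.

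The main obstacle is packaging Bayer's Gröbner degeneration as the \emph{unique} extension of the orbit map $\gm\to\hihilb$ to $\mathbb{A}^1_k$ at the level of the Hilbert scheme, rather than merely as a flat family of subschemes in $\prn$. This requires identifying Bayer's family with the morphism characterizing the BB scheme in the sense of \cite{Dri13, JS18}, which is where the general locally-finite-type BB formalism is essential. The combinatorial choice of $\omega$ is routine; the content lies in matching Bayer's flatness assertion with the functorial BB construction and in verifying that $\omega$-homogeneity forces monomiality on the truncations governing $\hihilb$.
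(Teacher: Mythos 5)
Your proposal is correct and takes essentially the same route as the paper: Bayer's weight vector (Proposition \ref{prop.weight}) linearizes $\prec$ on the relevant finite set of monomials and induces the $\gm$-action, and Bayer's \gr\ degeneration combined with uniqueness of the limit (separatedness of $\hihilb$, which is what makes $(\hihilb)^{+} \rightarrow \hihilb$ a monomorphism in the paper's Theorem \ref{thm.bb_is_grob}) identifies each BB cell with the corresponding \gr\ scheme. The only divergences are minor: for the fixed points you use the eigenspace argument ($\omega$-stable truncations are spanned by monomials since the weights are distinct), whereas Proposition \ref{prop.fixed_monomial} instead tracks the strictly positive powers of $t$ scaling the reduced \gr\ basis coefficients; and your ``small perturbation'' to separate weights is unnecessary, since the ``if and only if'' in Proposition \ref{prop.weight} already forces distinct weights on distinct monomials of the finite set.
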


\begin{theorem}
{\rm (Theorem \ref{thm.smoothness_on_gr}).} If the BB scheme $(\hihilb)_J^{+}$ is smooth at $\proj S/J \in (\hihilb)_J^{+}(k)$, then the \gr\ scheme $\hgrhilb$ is isomorphic to an affine space.
\end{theorem}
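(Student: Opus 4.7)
The plan is to reduce the statement to the known result \cite[Corollary 3.7]{Rob09} (equivalently \cite[Corollary 3.6]{RT10}), which asserts that $\hgrhilb$ is isomorphic to an affine space whenever it is smooth at $J$. Thus it suffices to deduce smoothness of $\hgrhilb$ at $J$ from the hypothesis that $(\hihilb)_J^{+}$ is smooth there.

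First I would construct a natural morphism $\iota : \hgrhilb \to (\hihilb)_J^{+}$ of schemes over $\hihilb$. Both schemes are locally closed in $\hihilb$ (by \cite[Theorem 5.3]{LR16} and \cite{Dri13, JS18} respectively), and by Theorem \ref{thm.bb_is_grob} they have the same underlying set. Bayer's degeneration, applied to the universal \gr\ family on $\hgrhilb$, is a $\gm$-equivariant family whose value at $0$ is the fixed point $J$; invoking the functor-of-points description of the BB scheme given in \cite{Dri13, JS18}, this data produces the desired morphism $\iota$. Since $\iota$ composed with $(\hihilb)_J^{+} \hookrightarrow \hihilb$ equals the locally closed embedding $\hgrhilb \hookrightarrow \hihilb$, and $(\hihilb)_J^{+} \hookrightarrow \hihilb$ is separated, $\iota$ is itself a locally closed embedding whose image has the same underlying set as $(\hihilb)_J^{+}$.

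Next I compare tangent spaces and dimensions at $J$. The embedding $\iota$ induces an injection $d\iota : T_J \hgrhilb \hookrightarrow T_J (\hihilb)_J^{+}$. Smoothness of $(\hihilb)_J^{+}$ at $J$ gives $\dim_k T_J (\hihilb)_J^{+} = \dim_J (\hihilb)_J^{+}$, and the set-theoretic equality of $\hgrhilb$ and $(\hihilb)_J^{+}$ in $\hihilb$ forces their local Krull dimensions at $J$ to coincide. Together with the general bound $\dim_J \hgrhilb \le \dim_k T_J \hgrhilb$, this yields
\[
\dim_k T_J \hgrhilb \le \dim_k T_J (\hihilb)_J^{+} = \dim_J (\hihilb)_J^{+} = \dim_J \hgrhilb \le \dim_k T_J \hgrhilb,
\]
so all terms agree and $\hgrhilb$ is smooth at $J$. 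The cited Roggero theorem then produces an isomorphism $\hgrhilb \cong \mathbb{A}^d_k$ with $d = \dim_k T_J \hgrhilb$.

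The main obstacle lies in the first step: verifying carefully that Bayer's degeneration, applied to the universal \gr\ family over $\hgrhilb$, satisfies the precise functorial criterion of \cite{Dri13, JS18} so as to produce $\iota$ as a genuine morphism of schemes and not merely a bijection of $k$-points. Once $\iota$ is in hand, the remainder of the argument is elementary dimension counting combined with the quoted smoothness-implies-affine-space theorem for \gr\ schemes.
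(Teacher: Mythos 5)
Your argument is correct in substance and follows the same skeleton as the paper's proof of Theorem \ref{thm.smoothness_on_gr} (build the comparison morphism $\iota : \hgrhilb \to \bbhilb$ from Bayer's degeneration, then trap the tangent-space dimension and the Krull dimension at $J$ between one another), but you execute the two key steps differently. Where you deduce $\dim_J \hgrhilb = \dim_J \bbhilb$ from the set-theoretic coincidence of the two subschemes, the paper proves a standalone algebraic result, Lemma \ref{lem.dimensions}: via Cohen's structure theorem it shows that a local morphism $\mathcal{O}_B \to \mathcal{O}_G$ with $\mathcal{O}_B$ regular, inducing bijections on local homomorphisms into every field extension $K/k$, satisfies $\dim \mathcal{O}_G \ge \dim \mathcal{O}_B$; this needs only the morphism and Theorem \ref{thm.bb_is_grob}, not that $\iota$ is an immersion. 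And where you close by quoting the smoothness-implies-affine-space theorem of \cite[Corollary 3.7]{Rob09}, \cite[Corollary 3.6]{RT10}, the paper instead invokes the closed embedding $\hgrhilb \hookrightarrow T_G$ of the \gr\ scheme into its Zariski tangent space at $J$ (\cite{FR09,RT10}) and shows, from the collapsed chain of inequalities, that this embedding is an isomorphism --- in effect reproving the Roggero--Terracini result in this setting rather than citing it. Both routes are legitimate; yours is lighter on commutative algebra but leans more heavily on the quoted literature, while the paper's Lemma \ref{lem.dimensions} is a more robust tool (no immersion hypothesis) that may be of independent use.

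Two points in your write-up need tightening, though both are repairable with ingredients you already have. First, you should not claim that $\bbhilb$ is locally closed in $\hihilb$: the references \cite{Dri13,JS18} give that $(\hihilb)^{+} \to \hihilb$ is a monomorphism (hence separated), and that is exactly what your cancellation argument for ``$\iota$ is an immersion'' requires --- for non-smooth ambient schemes the BB scheme need not embed as a locally closed subscheme. Second, the inference ``same underlying set in $\hihilb$, hence equal local Krull dimensions'' is not purely about subsets, since a monomorphism need not be a homeomorphism onto its image; the topology of $\bbhilb$ could a priori differ from the subspace topology of its image. Route the claim through $\iota$ instead: by Theorem \ref{thm.bb_is_grob} applied to the residue field of each point of $\bbhilb$, the immersion $\iota$ is surjective on underlying points, hence is a surjective closed immersion cut out by an ideal contained in the nilradical, hence a homeomorphism, and the equality $\dim_J \hgrhilb = \dim_J \bbhilb$ follows. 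With these repairs your proof is complete.
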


\begin{theorem}
{\rm (Proposition \ref{prop.local_smoothness_of_BB}).} If $\hihilb$ is smooth at $\proj S/J \in \hihilb(k)$, then the BB scheme $(\hihilb)^{+}_{J}$is smooth at $\proj S/J \in (\hihilb)^{+}_{J}(k)$.
\end{theorem}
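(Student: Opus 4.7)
The statement is étale-local at $x = \proj S/J$, so the first move is to pass to a $\gm$-invariant open neighbourhood of $x$. The $\gm$-action induces a linear $\gm$-action on $T_x \hihilb$, which decomposes into weight spaces
\[ T_x \hihilb \;=\; \bigoplus_{w \in \mathbb{Z}} (T_x \hihilb)_w. \]
Smoothness of $\hihilb$ at $x$ together with the fact that $x$ is $\gm$-fixed should yield a $\gm$-equivariant étale neighbourhood of $x$ identifying $(\hihilb, x)$ with a neighbourhood of the origin in $T_x \hihilb$ equipped with the linearised action. This equivariant slice at a smooth fixed point is the key linearisation input; in the setting of $\gm$-schemes locally of finite type it is part of the foundational work in \cite{Dri13, JS18}.

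Second, I would use that the BB construction is compatible with $\gm$-equivariant étale morphisms, a property built into the functorial definition in \cite{Dri13, JS18}. Via the étale neighbourhood above, $(\hihilb)^+_J$ is identified near $x$ with $(T_x \hihilb)^+$. For a linear $\gm$-action on a vector space, the BB scheme is simply the direct sum of the non-negative weight subspaces; this is an affine space and in particular smooth at the origin. Transporting along the étale map then proves that $(\hihilb)^+_J$ is smooth at $x$, and as a byproduct identifies its tangent space with the non-negative weight part of $T_x \hihilb$.

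The main obstacle I anticipate is arranging the equivariant linearisation cleanly, since the equivariant local structure of a smooth scheme at a $\gm$-fixed point is not quite as automatic as the non-equivariant version. In practice the most efficient route is to invoke directly the theorem in \cite{Dri13, JS18} asserting that $X$ smooth at a $\gm$-fixed point implies $X^+$ smooth at that point, and to verify that $\hihilb$ with the $\gm$-action constructed earlier (see Proposition \ref{prop.fixed_monomial}) meets its hypotheses. If one prefers a self-contained argument, the alternative is to carry out the weight-space analysis on the deformation functor of $\proj S/J$ inside $\hihilb$: splitting a versal family equivariantly and then cutting out $(\hihilb)^+_J$ as the non-negative weight part reproduces the same conclusion.
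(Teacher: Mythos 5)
Your two proposed routes both stall at the same point, and it is exactly the point the paper's proposition exists to handle: localization of the smoothness theorem. Your fallback route (``invoke directly the theorem in \cite{Dri13, JS18} asserting that $X$ smooth at a $\gm$-fixed point implies $X^{+}$ smooth at that point'') cites a statement that does not appear in those references: what \cite[Corollary 7.3]{JS18} proves (quoted in the paper as Theorem \ref{thm.smoothness_on_BB}) is the \emph{global} statement that if $X$ is smooth over $k$ then $X^{+}$ is smooth. Passing from that global theorem to smoothness of $X^{+}_{x}$ at a single smooth fixed point is precisely the content of Proposition \ref{prop.local_smoothness_of_BB}, so invoking a pointwise version is assuming what is to be proved. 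Your main route (equivariant \'etale linearization) is mathematically reasonable, and the \'etale compatibility of attractors that it needs is indeed available in \cite{Dri13, JS18}; but its key input --- a $\gm$-equivariant \'etale neighbourhood of $x$ mapping to $(T_{x}X, 0)$ with the linearized action --- is \emph{not} part of the foundational work in those references: neither proves a Luna-type slice theorem. To supply it you would need Sumihiro's theorem (to get a $\gm$-invariant affine neighbourhood) plus a Luna/Alper--Hall--Rydh slice theorem, and since the paper works over an arbitrary field $k$ (no characteristic-zero or algebraic-closure hypothesis is in force in this proposition), this is heavy machinery whose hypotheses you would still have to verify. You flag this obstacle yourself, but flagging it is not closing it.

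The paper closes the gap with a far lighter device that your proposal never mentions. Write $X = \hihilb$ and $x = \proj S/J$. The smooth locus $U \subset X$ is open and $\gm$-invariant (invariance is automatic, since the action is by automorphisms); by Proposition \ref{prop.open_immersion} (\cite[Proposition 5.2]{JS18}), the $\gm$-equivariant open immersion $U \hookrightarrow X$ induces an open immersion $U^{+} \hookrightarrow X^{+}$, hence the BB scheme $U^{+}_{x}$ is an open subscheme of $X^{+}_{x}$ containing $x$; and $U^{+}$ is smooth by the global Theorem \ref{thm.smoothness_on_BB} applied to the smooth scheme $U$. That is the whole proof: no linearization, no slices, no tangent-space weight analysis. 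If you replace your \'etale slices by this open-immersion compatibility, your argument collapses to the paper's; if you insist on the linearization route, you must import and verify an equivariant slice theorem valid over the given field $k$, which is a genuinely larger commitment than the statement you are trying to prove.
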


Such a combination already has been investigated for the Hilbert scheme $H^d_n = H^d(\mathbb{A}^n_k)$ of $d$ points in the affine space $\mathbb{A}^n_k$, but not been investigated for an arbitrary $\hihilb$. Let us recall results on $H^d_n$. The BB scheme in $H^d_n$ with respect to weights on coordinates of $\mathbb{A}^n_k$ is described in \cite{EL12} as the intersection of schemes determined by an argument of initial ideals. Moreover, \cite{EL12} shows that the BB schemes included in a fiber of the Hilbert-Chow morphism corresponding to a coordinate with negative weight. These results seem to be similar and related to our construction of the decomposition $(\ast)$. \cite{Jel17} deals with the obstruction theory on the BB schemes in $H^d_n$ and show that Murphy's Law holds on the BB schemes. However, in this paper, we do not deal with Murphy's Law and obstructions of the BB schemes in $\hihilb$.

\section{Preliminaries and Notation}
\begin{itemize}
\item Let $k$ be a field and $S=k[x] = k[x_0, \ldots, x_n]$ the polynomial ring over $k$ in $(n+1)$ variables. We always fix a monomial order $\prec$ on $S$. We consider the ordinal degrees of polynomials in $S$. For a subset $A \subset S$, we denote by $A_r$ the homogeneous elements of $A$ with degree $r$ and denote by $\langle A \rangle$ the ideal generated by $A$ in $S$.
\item For $\alpha = (\alpha_0, \ldots, \alpha_n) \in \mon$, let $x^{\alpha} = x_0^{\alpha_0} \cdots x_n^{\alpha_n}$. Using this notation, we regard $\mon$ as the set of monomials in $(n+1)$ variables. The degree of $\alpha$ is $|\alpha| = \alpha_0 + \cdots + \alpha_n$. For a subset $A \subset \mon$, let $A_r = \{ \alpha \in A \mid |\alpha| = r\}$. Let $e_i$ be the $i$-th canonical vector.
\item For $k$-schemes $X$ and $Y$, let $X(Y) = \Hom_k (Y, X)$. If $Y = \spec A$, we denote it by $X(A)$ instead.
\end{itemize}

The \textit{Hilbert scheme} $\hihilb$ is the scheme representing the following \textit{Hilbert functor}:
\[ \hifunc : (k\alg) \rightarrow (\cset) \]
\[  A \mapsto \left\{ Y \subset \mathbb{P}^{n}_{A} \left| \begin{aligned} &\text{$Y$ is a closed subscheme in $\mathbb{P}^n_{A}$ flat over $\spec A$,}\\
&\text{the Hilbert polynomials of all fibers on closed}\\
&\text{points of $\spec A$ are $P$} \end{aligned} \right. \right\}. \]

There exists a canonical morphism $\hgrhilb \rightarrow \hihilb$ induced by the natural transformation
\[ \hgrfunc \rightarrow \hifunc \]
\[ G \mapsto \proj A[x]/ \langle G \rangle. \]
If we denote a morphism $\hgrhilb \rightarrow \hihilb$, we always mean this morphism.

\section{Computable decomposition of the Hilbert scheme into the \gr\ schemes}

We recall the embedding of the Hilbert scheme into the Grassmannian. See \cite{HS04}, \cite{MacD07} or other references about Grothendieck's construction of Hilbert schemes.

\begin{proposition}\label{prop.Gotzmann}
{\rm \cite[Corollary B.5.1]{Vas98}} Let $P$ be a Hilbert polynomial of a closed subscheme of a projective space. Then there exist integers $a_1 \geq a_2 \geq \cdots \geq a_r \geq 0$ such that
\[ P(t) = \sum_i^r \binom{t+a_i-i+a}{a_i}. \]
The number $r$ is called the \textit{Gotzmann number} of $P$.
\end{proposition}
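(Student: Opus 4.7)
The plan is to prove existence by induction on the pair $(\deg P, e)$ in lexicographic order, where $e = (\deg P)! \cdot (\text{leading coefficient of } P)$, peeling off one binomial summand from $P$ at each step.

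For the base case $\deg P = 0$, the polynomial $P(t) = c$ is a non-negative integer (the length of a zero-dimensional subscheme), and taking $a_1 = \cdots = a_c = 0$ works since each summand $\binom{t+0-i+1}{0}$ equals $1$. For the inductive step, set $a_1 := \deg P = d$ and define $Q(t) := P(t) - \binom{t+d}{d}$. When $e \geq 2$, the polynomial $Q$ has degree $d$ with leading coefficient $(e-1)/d!$; when $e = 1$, the degree of $Q$ strictly drops. In either case the inductive parameter strictly decreases. Assuming $Q$ is zero or again a Hilbert polynomial of a closed subscheme, the inductive hypothesis provides a monotone sequence $b_1 \geq b_2 \geq \cdots \geq b_{r'} \geq 0$ decomposing $Q$, and setting $a_{j+1} := b_j$ yields the required sequence for $P$. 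Monotonicity $a_1 \geq a_2$ holds because $b_1 = \deg Q \leq d$. The matching of index shifts (the $-i+1$ in the $i$-th summand) is best handled by phrasing the induction in terms of the number of summands rather than the degree, or by applying the hypothesis to a suitably shifted auxiliary polynomial.

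The main obstacle is the claim that $Q = P - \binom{t+d}{d}$ is itself a Hilbert polynomial of a closed subscheme. This is not automatic, since arbitrary numerical polynomials with non-negative leading coefficient need not arise as Hilbert polynomials. Geometrically, if $Y \subset \mathbb{P}^N$ realizes $P$, one would realize $Q$ as the Hilbert polynomial of a residual of $Y$ against a linear $d$-plane (whose Hilbert polynomial is exactly $\binom{t+d}{d}$), subject to some flatness care in the construction. Combinatorially, one instead invokes Macaulay's growth bound on Hilbert functions together with Gotzmann's persistence theorem to verify directly that the numerical conditions characterizing Hilbert polynomials transfer from $P$ to $Q$; this is the route taken in the cited reference \cite{Vas98}. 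Once the claim is secured, the induction closes with no further difficulty, and the Gotzmann number $r$ emerges as the total count of summands produced by the recursion.
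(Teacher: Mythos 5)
The paper gives no proof of this proposition at all: it is imported verbatim from \cite[Corollary B.5.1]{Vas98} (with a typo---the exponent term $\binom{t+a_i-i+a}{a_i}$ should read $\binom{t+a_i-i+1}{a_i}$, as you correctly assumed). So your proposal has to stand on its own, and it does not: the step that carries the entire content of the theorem is the one you assume. Your induction needs the remainder after peeling off the top binomial to be again a Hilbert polynomial, and as you literally state it---with $Q(t)=P(t)-\binom{t+d}{d}$, $d=\deg P$---the assumption is \emph{false}. Take $P(t)=2t+1$, the Hilbert polynomial of a conic in $\mathbb{P}^2_k$. Then $Q(t)=P(t)-\binom{t+1}{1}=t$, and $t$ is not the Hilbert polynomial of any closed subscheme: a one-dimensional subscheme of degree $1$ contains a line as a closed subscheme, forcing $P(m)\geq m+1$ for $m\gg 0$. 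The failure is exactly the index shift you relegated to a closing parenthetical: subtracting the first summand from the target form leaves $\sum_{i\geq 2}\binom{t+a_i-i+1}{a_i}$, which is a Gotzmann decomposition evaluated at $t-1$, so the object that must be certified to be a Hilbert polynomial (or zero) is the shifted remainder $Q(t+1)=P(t+1)-\binom{t+1+d}{d}$, not $Q(t)$. With that correction the induction closes formally, but the certification itself is equivalent to the theorem: granting it, the theorem follows by your three-line recursion; conversely, granting the theorem (plus the standard fact that every polynomial in Gotzmann form is the Hilbert polynomial of a lex-segment subscheme), the certification follows. Deferring it to ``Macaulay's growth bound plus Gotzmann persistence, as in the cited reference'' is therefore not filling a technical lacuna---it is citing the very proof you were asked to give.

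The geometric patch you float does not rescue this. Hilbert polynomials are not subtractive under residuation or liaison, and no flat construction can produce a closed subscheme with Hilbert polynomial $P-\binom{t+d}{d}$ from one with Hilbert polynomial $P$; the example above shows that no such subscheme need exist at all. If you want a self-contained argument, work at the level of Hilbert functions rather than polynomials: by Macaulay's theorem, $P$ is the Hilbert polynomial of $S/L$ for a saturated lex-segment ideal $L$, and for such $L$ the complement of $L$ in the monomial basis decomposes explicitly into ``shifted polynomial ring'' slices, whose count in degree $t\gg 0$ is precisely $\sum_{i=1}^{r}\binom{t+a_i-i+1}{a_i}$ with $a_1\geq\cdots\geq a_r\geq 0$. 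That route never needs the transfer claim, because the combinatorics of the lex ideal hands you all $r$ summands at once instead of one at a time.
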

We call $I_{\geq r} = \oplus_{s \geq r} I_s$ the \textit{$r$-truncation} of $I$. We say $I$ is \textit{$r$-truncated} if $I_{\geq r} = I$. Moreover, we denote by $I_{\mathrm{sat}}$ the \textit{saturation} of $I$ by $\langle x_0, \ldots, x_n \rangle$. We say $I$ is \textit{saturated} if $I_{\mathrm{sat}} = I$. If $I$ is a homogeneous saturated ideal in $S$ such that the Hilbert polynomial of $S/I$ is $P$, then $\dim_k (S/I)_{s} = P(s)$ for any $s \geq r$.

We have a closed embedding
\[ \hihilb \hookrightarrow \grass, \]
where $\grass$ is the Grassmannian. Then we can describe
\[ \hihilb(A) \cong \left\{ I \subset A[x] \left| \begin{aligned}
&\text{$I$ is the $r$-truncation of a saturated ideal}\\
&\text{defining an element of $\hifunc(A)$} \end{aligned} \right. \right\}. \]
For short, we denote by $\ipn$ the above condition: $\ipn \equiv$ $I$ is the $r$-truncation of a saturated ideal defining an element of $\hifunc$. The image of $\hifunc(k)$ in $\Hom_k(\spec k, \grass)$ is the set of subspaces $V \subset S_r$ such that $\dim_k V = \binom{n+r}{r} -P(r)$ and $\dim_k S_1\cdot V = \binom{n+r+1}{r+1} -P(r+1)$. Therefore the condition $\ipn$ is equivalent to the following condition: $I$ is generated by $I_r$, $\dim_k I_{r} = \binom{n+r}{r} - P(r)$ and $\dim_k I_{r+1} = \binom{n+r+1}{r+1} - P(r+1)$.

We introduce a decomposition of the Hilbert scheme into the \gr\ schemes.

\begin{lemma}\label{lem.initial_sat_ipn}
A homogeneous ideal $I$ in $S$ satisfies $\ipn$ if and only if the initial ideal $J= \ini_{\prec} I$ satisfies $\ipn$.
\end{lemma}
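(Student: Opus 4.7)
The plan is to invoke the equivalent characterization of $\ipn$ stated just above the lemma: $I$ satisfies $\ipn$ if and only if $I$ is generated by $I_r$, with $\dim_k I_r = \binom{n+r}{r} - P(r)$ and $\dim_k I_{r+1} = \binom{n+r+1}{r+1} - P(r+1)$. Writing $J = \ini_{\prec} I$, the initial ideal of any homogeneous $k$-subspace has the same $k$-dimension as the subspace itself, so $\dim_k J_s = \dim_k I_s$ for every $s$. The two dimension conditions therefore transfer automatically in both directions, and the real content of the lemma is that generation in degree $r$ transfers.

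For the forward direction, suppose $I$ satisfies $\ipn$. From $I_s = 0$ for $s < r$ one immediately obtains $J_s = 0$ for $s < r$. Since $I$ is generated in degree $r$ we have $I_{r+1} = S_1 \cdot I_r$, so the value $\binom{n+r+1}{r+1} - P(r+1)$ realises the Macaulay minimum growth for a subspace of $S_r$ of dimension $\binom{n+r}{r} - P(r)$: Macaulay's theorem gives it as a lower bound, and the subspace $I_r$ achieves it. Applying Macaulay's theorem to $J_r$, which has the same $k$-dimension as $I_r$, yields $\dim_k S_1 \cdot J_r \geq \binom{n+r+1}{r+1} - P(r+1) = \dim_k J_{r+1}$, and the inclusion $S_1 \cdot J_r \subseteq J_{r+1}$ forces equality. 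Gotzmann's persistence theorem applied to $J_r$ then propagates Macaulay-minimal growth to every higher degree, giving $\dim_k S_s \cdot J_r = \binom{n+r+s}{r+s} - P(r+s) = \dim_k J_{r+s}$ for all $s \geq 0$, whence $S_s \cdot J_r = J_{r+s}$ and $J$ is generated by $J_r$.

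For the converse, I would take a reduced \gr\ basis $\{g_1, \ldots, g_m\}$ of $I$: the leading monomials $\{\ini_{\prec} g_i\}$ form the unique minimal monomial generating set of $J$, and if $J$ is generated by $J_r$ then this minimal generating set lies in degree $r$, so each homogeneous $g_i$ has degree $r$ and $I$ is generated by $I_r$. The main obstacle is the identification of $\binom{n+r+1}{r+1} - P(r+1)$ with the Macaulay-minimum growth of $\binom{n+r}{r} - P(r)$ in degree $r$, which is the arithmetic substance behind Proposition \ref{prop.Gotzmann}; once that is accepted, the argument reduces to dimension preservation under initial ideals, Macaulay's theorem, Gotzmann's persistence theorem, and the standard properties of reduced \gr\ bases.
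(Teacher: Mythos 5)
Your proof is correct, but it takes a genuinely different route from the paper's. The paper never leaves the original definition of $\ipn$: it combines the fact that passing to the initial ideal preserves Hilbert functions with the regularity statement recorded just before the lemma (a saturated homogeneous ideal whose quotient has Hilbert polynomial $P$ has Hilbert function equal to $P$ in all degrees $\geq r$), and then compares $J$ with $J_{\mathrm{sat}}$ in the forward direction and $I$ with $I_{\mathrm{sat}}$ in the converse: in each case the ideal and its saturation have the same dimensions in degrees $\geq r$, so each of $J$, $I$ is the $r$-truncation of its own saturation, which defines an element of $\hifunc(k)$. Your proof instead stays entirely inside the Grassmannian characterization (generation in degree $r$ plus the two dimension conditions) and drives it with Macaulay's bound, Gotzmann persistence, and the arithmetic identity that the Macaulay-minimal growth of $\binom{n+r}{r}-P(r)$ at degree $r$ is exactly $\binom{n+r+1}{r+1}-P(r+1)$; the converse via the reduced \gr\ basis (leading monomials of the reduced basis are the minimal generators of $J$, hence all of degree $r$) is a clean argument that does not appear in the paper. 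The paper's route buys brevity and symmetry: saturation plus Gotzmann regularity does all the work, with no growth estimates. Your route buys transparency: it explains why conditions in only the two degrees $r$ and $r+1$ control all higher degrees, and it avoids saturation altogether. One step to tighten: in your chain $\dim_k S_s\cdot J_r = \binom{n+r+s}{r+s}-P(r+s) = \dim_k J_{r+s}$, the second equality is not automatic, since $\dim_k J_{r+s} = \dim_k I_{r+s}$ and you must still show $\dim_k I_{r+s} = \binom{n+r+s}{r+s}-P(r+s)$; this follows by applying the identical persistence argument to $I$ itself, which is generated in degree $r$ with minimal first-step growth by hypothesis. So persistence has to be invoked twice, once for $I$ and once for $\langle J_r\rangle$, whereas as written you only invoke it for $J_r$.
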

\begin{proof}
Assume that $I$ satisfies $\ipn$. For any $s \geq r$, we have $\dim_k (S/J)_{s} = \dim_k (S/I)_{s} = P(s)$. The Hilbert polynomial of $\proj S/J$ in $\mathbb{P}^n_k$ is also $P$, so $\dim_k (S/J_{\mathrm{sat}})_s = P(s)$. Thus $(J_{\mathrm{sat}})_{\geq r} = J_{\geq r}= \ini_{\prec}( I_{\geq r}) = J$. Conversely, assume that $J$ satisfies $\ipn$. Then there exists a saturated monomial ideal $J'$ such that $J = J'_{\geq r}$ and $\proj S/J'  \in \hifunc(k)$. Put $I' = I_{\mathrm{sat}}$. Then for any $s \geq r$, we have $\dim_k (S/I)_{s} = \dim_k (S/J)_{s} = \dim_k (S/J')_{s} = \dim_k (S/I')_{s} = P(s)$. Therefore we obtain $I = I'_{\geq r}$.
\end{proof}

\begin{proposition}\label{prop.locally_closed}
{\rm(\cite[Theorem 5.3]{LR16})} The morphism $\hgrhilb \rightarrow \hihilb$ is a locally closed immersion if $J$ satisfies $\ipn$.
\end{proposition}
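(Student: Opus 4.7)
The plan is to realize $\hgrhilb$ as a locally closed subscheme of the Grassmannian $G = \mathrm{G}(N, S_r)$ with $N = \binom{n+r}{r} - P(r)$, inside which $\hihilb$ already sits as a closed subscheme. Because $J$ satisfies $\ipn$, the characterization of $\ipn$ recalled just before this proposition forces $J$ to be generated by $J_r$. Comparing Hilbert functions degree by degree shows that the same conclusion holds in families: for any $k$-algebra $A$, every homogeneous ideal $I \subset A[x]$ with $\ini_\prec I = J \otimes_k A$ is generated by a locally free $A$-submodule $I_r \subset A[x]_r$ of rank $N$. Hence $\hgrfunc$ may be identified with the subfunctor of the Grassmannian functor given by
\[ A \mapsto \{V \subset A[x]_r : V \text{ locally free of rank } N,\ \ini_\prec V = J_r \otimes_k A \}. \]

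Next I would exhibit the locally closed structure on the standard affine chart around $[J_r]$. Let $W := \mathrm{span}_k(x^\beta : x^\beta \notin J_r) \subset S_r$ and let $U_{J_r} \subset G$ be the open subscheme where the universal subspace is complementary to $W \otimes A$. Over $U_{J_r}$ the universal subspace admits a canonical basis
\[ v_\alpha = x^\alpha - \sum_{x^\beta \notin J_r} c_{\alpha\beta}\, x^\beta, \quad x^\alpha \in J_r, \]
and the coefficients $c_{\alpha\beta}$ identify $U_{J_r}$ with $\mathbb{A}^{N \cdot P(r)}_k$. A subspace $V$ satisfies $\ini_\prec V = J_r \otimes_k A$ if and only if it lies in $U_{J_r}$, which is automatic since the initial-term condition forces $V \cap (W \otimes A) = 0$, and $\ini_\prec v_\alpha = x^\alpha$ for every $\alpha$. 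The latter amounts to the vanishing of $c_{\alpha\beta}$ whenever $x^\beta \succ x^\alpha$. These are finitely many linear equations on $U_{J_r}$, cutting out a closed subscheme $Z$ of $U_{J_r}$, hence a locally closed subscheme of $G$, representing the subfunctor above.

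Finally, by Lemma \ref{lem.initial_sat_ipn} every ideal parameterized by $Z$ satisfies $\ipn$, so the canonical map $Z \to G$ factors through the closed embedding $\hihilb \hookrightarrow G$. Therefore $\hgrhilb \cong Z \hookrightarrow \hihilb$ is a locally closed immersion, as required. The main obstacle I anticipate is confirming that the scheme structure on $Z$ just constructed coincides with the one defined by $\hgrfunc$ via reduced Gröbner bases rather than merely echelon reductions; this reduces to the uniqueness of the reduced Gröbner basis over a commutative ring (\cite{Wib07}), which guarantees that the universal coefficients $c_{\alpha\beta}$ on $Z$ agree with the universal coefficients of the reduced Gröbner basis at every $A$-point, so that the two representing objects are canonically isomorphic.
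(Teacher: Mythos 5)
There is a genuine gap, and it sits at the central step of your argument: you identify $\hgrfunc$ with the subfunctor of the Grassmannian functor cut out by the echelon conditions ($c_{\alpha\beta}=0$ whenever $x^{\beta}\succ x^{\alpha}$), i.e.\ you claim that the only constraint is $\ini_{\prec}v_{\alpha}=x^{\alpha}$ for each generator. But the defining condition of $\hgrfunc$ is $\ini_{\prec}\langle G\rangle = J\otimes_k A$ \emph{as ideals}, a condition in every degree, not only in degree $r$. From the echelon conditions one only gets the inclusion $\ini_{\prec}\langle V\rangle \supseteq J$; equality is an extra and genuinely non-linear condition, equivalent (by Gotzmann persistence, given that $J$ satisfies the conditions of Proposition \ref{prop.trunc_of_sat}) to the rank condition $\dim_K S_1\cdot V=\binom{n+r+1}{r+1}-P(r+1)$ on the multiplication map $S_1\otimes V\to S_{r+1}$, or equivalently to the vanishing of the remainders of all S-polynomials of the $v_\alpha$ --- polynomial, not linear, equations in the $c_{\alpha\beta}$. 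Consequently your linear scheme $Z$ strictly contains the Gr\"obner scheme in general, and both of your concluding steps fail: $Z$ does not represent $\hgrfunc$, and the map $Z\to G$ does \emph{not} factor through $\hihilb$, since a point of $Z$ can generate an ideal whose degree-$(r+1)$ component is too large and hence does not satisfy $\ipn$. You cannot invoke Lemma \ref{lem.initial_sat_ipn} to repair this, because its hypothesis $\ini_{\prec}I=J\otimes_k K$ is precisely what is not known for points of $Z$.

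The failure can be seen from within the paper itself. Your construction would prove that $\hgrhilb$ for every $J\in\mpn$ is a linear subspace of an affine chart, hence isomorphic to an affine space; but Example \ref{ex.decomp_2t+2} exhibits $15$ (resp.\ $16$) monomial ideals in $\mathcal{M}_{2t+2,3}$ whose Gr\"obner schemes are \emph{singular}, and your claim would also render Theorem \ref{thm.smoothness_on_gr} and Corollary \ref{cor.smoothness} vacuous. Note also that the paper offers no proof to compare against: the proposition is quoted from \cite[Theorem 5.3]{LR16}. The correct statement underlying your strategy is that, set-theoretically, $\hgrhilb = Z\cap \hihilb$ inside the Grassmannian; the actual content of the cited theorem is the scheme-theoretic comparison, namely that the scheme structure defined by the Gr\"obner-basis (marked-basis) equations --- the echelon conditions \emph{together with} the S-polynomial/flatness equations --- agrees with a locally closed subscheme of $\hihilb$. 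That comparison is the part your proposal skips.
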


\begin{proposition}\label{prop.decompHilb}
Let $\mpn$ be the set of monomial ideals satisfying $\ipn$. Then for any field extension $k \subset K$,
\[ \hifunc(K) = \coprod_{J \in \mpn}  \hgrfunc(K). \]
Namely, in set-theoretically,
\[ \hihilb = \coprod_{J \in \mpn} \hgrhilb. \]
\end{proposition}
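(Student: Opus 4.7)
The plan is to prove the set equality by explicitly constructing mutually inverse maps between $\hifunc(K)$ and the disjoint union $\coprod_{J \in \mpn} \hgrfunc(K)$. The main ingredients will be Lemma \ref{lem.initial_sat_ipn} (applied in $K[x]$ after flat base change), the uniqueness of the initial ideal of a homogeneous ideal, and the existence and uniqueness of reduced \gr\ bases over a field.

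First I would construct the map out of each summand. Given $J \in \mpn$ and $G \in \hgrfunc(K)$, set $I = \langle G \rangle$. By construction $\ini_{\prec} I = J \otimes_k K$. Since $J \in \mpn$ and flat base change preserves the relevant numerical data (dimensions of graded pieces, Hilbert polynomials of saturations, and saturation along $\langle x_0, \ldots, x_n \rangle$), the extension $J \otimes_k K$ again satisfies $\ipn$, now in $K[x]$. Applying Lemma \ref{lem.initial_sat_ipn} over $K[x]$ then gives $I \in \hifunc(K)$.

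Next I would construct the inverse. Given $I \in \hifunc(K)$, put $J_K = \ini_{\prec} I \subset K[x]$. Being a monomial ideal, $J_K$ is generated by a set of monomials in $x_0, \ldots, x_n$, hence equals $J \otimes_k K$ for the unique monomial ideal $J \subset S$ sharing those generators. Lemma \ref{lem.initial_sat_ipn} applied in $K[x]$ shows that $J_K$ satisfies $\ipn$, and descending the numerical data through $k \hookrightarrow K$ gives $J \in \mpn$. The unique reduced \gr\ basis of $I$ with respect to $\prec$ then produces the required element of $\hgrfunc(K)$. That the two constructions are mutually inverse is immediate from the uniqueness of reduced \gr\ bases. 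Disjointness across summands is automatic: if the same $I$ were in the image of two different summands indexed by $J \neq J'$ in $\mpn$, then $\ini_{\prec} I$ would equal both $J \otimes_k K$ and $J' \otimes_k K$, forcing $J = J'$.

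The main obstacle will be the base change bookkeeping. Lemma \ref{lem.initial_sat_ipn} is written for $S = k[x]$, but I need to invoke its analogue over $K[x]$, and I need the condition $\ipn$ to pass freely between $J$ and $J \otimes_k K$. Both reduce to routine flat base change, since $k \hookrightarrow K$ preserves dimensions of graded pieces, formation of saturations by the irrelevant ideal, and formation of initial ideals once $\prec$ is fixed. Once these compatibilities are spelled out, the rest of the argument is formal.
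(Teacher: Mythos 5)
Your proposal is correct and follows essentially the same route as the paper: the paper's (very terse) proof likewise combines the identification of $\hgrfunc(K)$ with the set of homogeneous ideals $I \subset K[x]$ satisfying $\ini_{\prec} I = J \otimes_k K$ (via uniqueness of reduced \gr\ bases) with Lemma \ref{lem.initial_sat_ipn}, and disjointness via uniqueness of initial ideals. Your write-up merely makes explicit the base-change bookkeeping (applying the lemma over $K[x]$ and transporting $\ipn$ between $J$ and $J \otimes_k K$) that the paper leaves implicit.
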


\begin{proof}
This is easy from
\[ \hgrfunc(K) \cong \{ I \subset K[x] \mid \text{$I$ is a homogeneous ideal with $\ini_{\prec} I = J \otimes_k K$} \}\]
and Lemma \ref{lem.initial_sat_ipn}.
\end{proof}

Let us see that the set $\mpn$ is computable.

\begin{definition}\label{def.standard_set}
{\rm (\cite{Led11})} A subset $\Delta$ in $\mon$ is a \textit{standard set} if $\alpha + \beta \in \Delta$ implies $\alpha, \beta \in \Delta$ for any $\alpha , \beta \in \mon$. There is a one-to-one correspondence between the set of standard sets in $\mon$ and the set of monomial ideals in $S$ given by $\Delta \mapsto J_{\Delta} = \langle x^{\alpha} \mid \alpha \in \mon \setminus \Delta \rangle$. For a standard set $\Delta$, we define the set of \textit{corners}
\[ \cd = \{\alpha \in \mon \setminus \Delta \mid \forall\,i \ \alpha - e_i \not\in \mon \setminus \Delta \}. \]
The set of corners $\cd$ corresponds to the minimal generators of $J_{\Delta}$.
\end{definition}

\begin{proposition}\label{prop.trunc_of_sat}
Let $\Delta$ be a standard set. The monomial ideal $J_{\Delta}$ is an element of $\mpn$ if and only if the set of corners $\cd$ satisfies
\begin{itemize}
\item[(1)] $\cd \subset (\mon)_{r}$,
\item[(2)] $\#(\cd) = \binom{n+r}{r} - P(r)$,
\item[(3)] $\#(\{ \alpha + e_i \mid \alpha \in \cd, i=0, \ldots, n \} ) = \binom{n+r+1}{r+1} - P(r+1)$.
\end{itemize}

\end{proposition}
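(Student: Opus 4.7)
The plan is to invoke the reformulation of the condition $\ipn$ stated just before the proposition: a homogeneous ideal $I \subset S$ satisfies $\ipn$ if and only if $I$ is generated by $I_r$, $\dim_k I_r = \binom{n+r}{r} - P(r)$, and $\dim_k I_{r+1} = \binom{n+r+1}{r+1} - P(r+1)$. For the monomial ideal $I = J_\Delta$, I would translate each of these three requirements into a purely combinatorial condition on the corner set $\cd$, matching (1), (2), (3) respectively.

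For the first requirement, I would show that $J_\Delta$ is generated by $(J_\Delta)_r$ if and only if $\cd \subset (\mon)_r$. A corner $\alpha \in \cd$ of degree strictly less than $r$ would contribute $x^\alpha \in (J_\Delta)_{|\alpha|}$, which cannot lie in the ideal generated by $(J_\Delta)_r$ since the latter sits in degrees $\geq r$. A corner $\alpha \in \cd$ of degree strictly greater than $r$ likewise cannot be written as a multiple of any element of $(J_\Delta)_r$: the defining property of a corner combined with the fact that $\Delta$ is downward closed (an immediate consequence of the standard set axiom) implies that every proper divisor of $x^\alpha$ corresponds to an element of $\Delta$ and so lies outside $J_\Delta$. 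Conversely, if every corner has degree exactly $r$, then the minimal generators $\{x^\alpha \mid \alpha \in \cd\}$ all sit in $(J_\Delta)_r$ and the claim is immediate.

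Assuming (1), I would then compute $(J_\Delta)_r$ and $(J_\Delta)_{r+1}$ by exploiting that $\mon \setminus \Delta$ is upward closed, again a direct consequence of the standard set axiom. Every element of $\mon \setminus \Delta$ has the form $\alpha + \gamma$ for some $\alpha \in \cd$ and $\gamma \in \mon$. In degree $r$ the equality of degrees forces $\gamma = 0$, so $(\mon \setminus \Delta)_r = \cd$ and $\dim_k (J_\Delta)_r = \#(\cd)$, converting the required value of $\dim_k I_r$ into condition (2). In degree $r+1$ the same equality forces $\gamma = e_i$ for some $i$, so $(\mon \setminus \Delta)_{r+1} = \{\alpha + e_i \mid \alpha \in \cd,\ 0 \leq i \leq n\}$, and $\dim_k (J_\Delta)_{r+1}$ is exactly the cardinality appearing on the left-hand side of condition (3). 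There is no real obstacle beyond careful bookkeeping; the only subtlety is that the family of vectors $\alpha + e_i$ may present repetitions, which is handled automatically by writing (3) as the cardinality of the underlying set rather than a sum.
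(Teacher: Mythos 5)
Your proof is correct and takes essentially the same approach the paper intends: the paper states this proposition without an explicit proof, immediately after reformulating the condition $\ipn$ as the three requirements you invoke, so your argument simply supplies the combinatorial translation (corners of degree $\neq r$ obstruct generation by $I_r$, and the identifications $(\mon \setminus \Delta)_r = \cd$ and $(\mon \setminus \Delta)_{r+1} = \{ \alpha + e_i \mid \alpha \in \cd,\ i = 0, \ldots, n \}$ give the dimension counts) that the paper leaves implicit. All steps, including the treatment of repetitions among the $\alpha + e_i$ via set cardinality, are sound.
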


Therefore the set-theoretical decomposition
\[ \hihilb = \coprod_{J \in \mpn} \hgrhilb \]
is computable.

\begin{example}\label{ex.betti_of_pts}
We compute an example of the decomposition in Proposition \ref{prop.decompHilb}. We consider the Hilbert scheme of $d$ points in $\mathbb{P}^2_k$. The Hilbert scheme $\mathrm{Hilb}^d_{2}$ is smooth and its dimension is $2d$ \cite{Har10}. Using the same argument of \cite{Har10} (i.e. using an obstruction theory on the \gr\ scheme), we obtain that the \gr\ scheme $\hgrhilb$ is isomorphic to an affine space $\mathbb{A}^m_k$ for any $J \in \mathcal{M}_{d,2}$. We make a table of the numbers of $J \in \mathcal{M}_{d,2}$ such that $\hgrhilb \cong \mathbb{A}^m_k$. In fact, the numbers are the Betti numbers of the Hilbert schemes \cite{ES87}.

\end{example}

\begin{example}\label{ex.m+1}
Let us consider the case $P = t+1$, $n=3$. Then the Hilbert scheme $\mathrm{Hilb}^{t+1}_{3}$ parameterizes lines in $\mathbb{P}^3_k$ and isomorphic to the Grassmannian $\mathrm{G}(1,3)$. The numbers of $J \in \mathcal{M}_{t+1,3}$ are on Table \ref{tab.betti_num_m+1}. The Betti numbers of $\mathrm{G}(1,3)$ is computed by determining Schubert cycles in $\mathrm{G}(1,3)$ \cite{Ehr34}. The numbers on Table \ref{tab.betti_num_m+1} are just the Betti numbers of $\mathrm{G}(1,3)$.
\end{example}

\begin{table}[h]
\begin{tabular}{|c||ccccccccccc|} \hline
 $d \setminus m$ & 0 & 1 & 2 & 3 & 4 & 5 & 6 & 7 & 8 & 9 & 10\\ \hline \hline
1 & 1 & 1 & 1 & & & & & & & & \\ \hline
2& 1& 2& 3& 2& 1& & & & & & \\ \hline
3& 1& 2& 5& 6& 5& 2& 1& & & & \\ \hline
4& 1& 2& 6& 10& 13& 10& 6& 2& 1& & \\ \hline
5& 1& 2& 6& 12& 21& 24& 21& 12& 6& 2& 1\\ \hline
\end{tabular}
\caption{The numbers of $J \in \mathcal{M}_{d,2}$ such that $\hgrhilb \cong \mathbb{A}^m_k$ \label{tab.betti_num}}
\end{table}

\begin{table}[h]
\begin{tabular}{|c||ccccccccccc|} \hline
 $m$ & 0 & 1 & 2 & 3 & 4 & 5 & 6 & 7 & 8 & 9 & 10\\ \hline \hline
& 1 & 1 & 2 & 1& 1& & & & & & \\ \hline
\end{tabular}
\caption{The numbers of $J \in \mathcal{M}_{t+1,3}$ such that $\hgrhilb \cong \mathbb{A}^m_k$ \label{tab.betti_num_m+1}}
\end{table}

The Betti numbers of $\mathrm{Hilb}^d_2$ is determined in \cite{ES87} by computing a \textit{\bb\ decomposition} of $\mathrm{Hilb}^{d}_2$ \cite{BB73,BB76}. The next purpose is to show that the decomposition
\[ \hihilb = \coprod_{J \in \mpn} \hgrhilb \]
is just the \bb\ decomposition with respect to a $\gm$-action on $\hihilb$.

\section{$\gm$-action on the Hilbert scheme compatible with a monomial order}

\begin{proposition}\label{prop.weight}
{\rm (\cite[Proposition 1.8]{Bay82})} Let $\prec$ be a monomial order on $S$, and let $A$ be a finite subset of $\mon$. Then there exists a vector $\omega \in \mon$ such that for any $\alpha, \beta \in A$, $\alpha \prec \beta$ if and only if $\omega \cdot \alpha < \omega \cdot \beta$. Here $\omega \cdot \alpha$ is the ordinary inner product $\omega_0 \alpha_0 + \cdots \omega_n \alpha_n$.
\end{proposition}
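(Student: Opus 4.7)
The plan is to reduce the statement to a finite linear programming feasibility question, invoke Robbiano's theorem that any monomial order is represented by a real matrix, and upgrade a real solution to an element of $\mon$ by clearing denominators.

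First, I would enlarge $A$ to $A \cup \{0, e_0, \ldots, e_n\}$; this preserves finiteness, and any $\omega$ that represents $\prec$ on the enlargement automatically has $\omega_i > 0$ for all $i$, since $0 \prec e_i$ holds for every monomial order. Thus it suffices to produce $\omega \in \mathbb{Z}_{>0}^{n+1}$ satisfying $\omega \cdot \alpha < \omega \cdot \beta$ whenever $\alpha \prec \beta$ in $A$; the converse implication then follows from trichotomy of the total order $\prec$.

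Next, I would invoke Robbiano's theorem that $\prec$ is represented by a real matrix $M \in \mathbb{R}^{s \times (n+1)}$ with rows $M_1, \ldots, M_s$, meaning $\alpha \prec \beta$ iff $M\alpha <_{\mathrm{lex}} M\beta$. For any difference $v = \beta - \alpha$ with $\alpha \prec \beta$ in $A$, the vector $Mv$ is lex-positive. Setting $\omega_{\mathbb{R}} := N^{s-1} M_1 + N^{s-2} M_2 + \cdots + M_s$ for $N > 0$ sufficiently large (depending on the finite set $D := \{\beta - \alpha : \alpha \prec \beta \in A\}$), one verifies by an elementary bound that $\omega_{\mathbb{R}} \cdot v > 0$ for every $v \in D$: the leading contribution comes from the first nonzero row of $M$ applied to $v$, which is positive, and dominates the lower-order terms as $N$ grows. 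Finally, the subset $\{\omega \in \mathbb{R}^{n+1} : \omega \cdot v > 0 \text{ for all } v \in D\}$ is open by construction and nonempty by this step, so it contains a rational point; clearing denominators yields an integer representative, and by the opening reduction that representative has strictly positive entries, hence lies in $\mon$.

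The main obstacle, in my view, is the appeal to Robbiano's theorem (or some equivalent structural result for monomial orders); once that is in place, everything else is elementary linear-algebraic bookkeeping. A self-contained alternative would be an induction on the number of variables $n$, exploiting that restricting $\prec$ to monomials in a smaller variable set is still a monomial order, but invoking the matrix representation is cleaner and makes the $N^{s-1}$ combination completely transparent.
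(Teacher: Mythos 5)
Your argument is correct, but note that the paper itself offers no proof of this proposition: it is quoted verbatim from Bayer's thesis, and Bayer's proof runs along genuinely different lines from yours. Bayer works directly with the finite system of strict inequalities $\omega\cdot(\beta-\alpha)>0$ over the nonnegative orthant and applies linear-programming duality (Gordan/Farkas): if no such $\omega$ existed, there would be nonnegative integers $c_{\alpha\beta}$, not all zero, with $\sum c_{\alpha\beta}(\beta-\alpha)\le 0$ componentwise; writing $\sum c_{\alpha\beta}\alpha=\sum c_{\alpha\beta}\beta+d$ with $d\ge 0$, compatibility of $\prec$ with addition gives $x^{\sum c\beta}\succ x^{\sum c\alpha}$, while $x^d\succeq 1$ gives $x^{\sum c\alpha}\succeq x^{\sum c\beta}$, a contradiction. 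That argument uses only the order axioms and finite-dimensional convexity. You instead invoke Robbiano's classification of monomial orders by real matrices and collapse the lexicographic comparison into a single weight via the combination $\sum_j N^{s-j}M_j$ with $N$ large, then perturb to a rational point of the open cone $\{\omega : \omega\cdot v>0 \text{ for all } v\in D\}$ and clear denominators; your device of enlarging $A$ by $0,e_0,\dots,e_n$ so that positivity of the entries becomes part of the inequalities is a clean touch, and the domination estimate for the first nonzero $M_{j_0}\cdot v$ is sound because $D$ is finite (just make sure $D$ is defined using the enlarged set throughout, as your opening reduction intends). The trade-off: your route is transparent once Robbiano's theorem is granted, but that structural theorem is a substantially heavier input than the finite statement being proved, whereas Bayer's duality argument --- the one the paper's citation actually points to --- is self-contained and elementary.
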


We fix a vector $\omega \in \mon$ given by Proposition \ref{prop.weight} for the finite subset $(\mon)_r$. This vector $\omega$ implies a $\mathbb{G}_m$-action on the \gr\ scheme $\hgrhilb$ for each $J \in \mpn$ as follows.

\begin{proposition}\label{prop.coordinate_of_grobner_scheme}
{\rm (\cite{RT10, Led11})} The \gr\ scheme $\hgrhilb$ for $J = J_{\Delta} \in \mpn$ is isomorphic to a closed subscheme of $\spec k[T_{\alpha, \beta} \mid \alpha\in \cd, \beta \in \Delta_{r}, \alpha \succ \beta]$. We define a grading on $R =k[T_{\alpha, \beta} \mid \alpha \in \cd, \beta \in \Delta_{r}, \alpha \succ \beta]$ such that $\mathrm{deg} (T_{\alpha, \beta}) = \omega \cdot \alpha - \omega \cdot \beta$ and attach a $\gm$-action on $R$ from this grading. Then the \gr\ scheme $\hgrhilb$ is $\gm$-invariant in $\spec R$.
\end{proposition}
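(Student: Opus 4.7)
The plan is to introduce a universal family of candidate reduced Gröbner bases parameterized by $\spec R$, identify the Gröbner basis condition as cutting out a closed subscheme, and then observe that the resulting defining equations are weight-homogeneous with respect to the prescribed grading on $R$.

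First I set up the universal polynomials. For each corner $\alpha \in \cd$, any homogeneous degree-$r$ polynomial with leading monomial $x^\alpha$ whose non-leading terms all lie in $\Delta$ (the reducedness condition relative to $J_\Delta$) must have the shape
\[
F_\alpha \;=\; x^\alpha \;+\; \sum_{\beta \in \Delta_r,\, \beta \prec \alpha} T_{\alpha,\beta}\, x^\beta \;\in\; R[x].
\]
Any $G \in \hgrfunc(A)$ is thus described by specific coefficients $c_{\alpha,\beta} \in A$, and the assignment $T_{\alpha,\beta} \mapsto c_{\alpha,\beta}$ yields a natural transformation $\hgrfunc \to \Hom_k(-,\spec R)$. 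To cut out $\hgrhilb$ as a closed subscheme of $\spec R$, I invoke Buchberger's criterion over a ring (as developed in \cite{Wib07}): the universal collection $\{F_\alpha\}$ specializes to a reduced Gröbner basis of an ideal with initial ideal $J_\Delta \otimes_k A$ exactly when every $S$-polynomial $S(F_\alpha, F_{\alpha'})$ reduces to zero modulo $\{F_\alpha\}$. Since $\Delta$ supplies a $k$-basis of $S/J_\Delta$, the generic reduction algorithm over $R$ terminates with a remainder that is an $R$-linear combination of standard monomials; requiring each such coefficient to vanish gives a finite list of polynomial equations in the $T_{\alpha,\beta}$, defining a closed subscheme $Z \subset \spec R$. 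By universality, the natural transformation factors through $\Hom_k(-, Z)$ and induces the required isomorphism $\hgrhilb \cong Z$.

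For the $\gm$-invariance, I extend the grading on $R$ to $R[x]$ by assigning $x_i$ weight $\omega_i$, so that $x^\beta$ has weight $\omega \cdot \beta$. The generic monomial $T_{\alpha,\beta}\, x^\beta$ then has total weight $(\omega \cdot \alpha - \omega \cdot \beta) + \omega \cdot \beta = \omega \cdot \alpha$, which matches the weight of the leading term $x^\alpha$; hence each $F_\alpha$ is weight-homogeneous of weight $\omega \cdot \alpha$ in $R[x]$. Both $S$-polynomial formation and the reduction algorithm preserve weighted homogeneity, so the remainder polynomials whose vanishing defines $Z$ are homogeneous elements of $R$ in the given grading. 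Consequently the ideal of $Z$ in $R$ is graded, $Z \subset \spec R$ is $\gm$-stable, and $\hgrhilb \cong Z$ is $\gm$-invariant in $\spec R$.

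The main subtlety I anticipate is making Buchberger's criterion function carefully in the ring-theoretic setting with a fixed initial ideal $J_\Delta$, which requires that $\Delta$ give a flat $A$-basis of $A[x]/\langle F_\alpha \rangle$ precisely when the universal $F_\alpha$ specialize to a Gröbner basis. The ring-theoretic foundation is laid in \cite{Wib07, RT10, Led11}, so no essentially new technique is needed beyond those references; the only genuinely new observation here is the weight-bookkeeping in the last paragraph.
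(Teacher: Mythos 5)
The paper itself gives no proof of this proposition---it is quoted directly from \cite{RT10, Led11}---and your argument reconstructs exactly the construction used in those references and relied on implicitly later in the paper (e.g.\ in Propositions \ref{prop.gm-equiv} and \ref{prop.fixed_monomial}): universal monic polynomials $F_\alpha$ with tails supported on $\Delta_r$, closed conditions obtained by reducing S-polynomials via Buchberger's criterion over a ring \cite{Wib07}, and the observation that the resulting equations are homogeneous for the weight grading $\deg T_{\alpha,\beta} = \omega \cdot \alpha - \omega \cdot \beta$. Your proof is correct; the only step deserving care, which you yourself flag, is that vanishing of the generic remainders over $R$ is equivalent to the specialized S-polynomials reducing to zero over each $A$ (this works because the $F_\alpha$ are monic, so generic division specializes to standard representations, and any element of an ideal with monic \gr\ basis supported on standard monomials is zero), after which your weight bookkeeping correctly shows the defining ideal is graded and hence $\gm$-stable.
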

The vector $\omega$ defines $\gm$-actions on $S$ and on $S_r$ as a negative grading $t \cdot x^{\alpha} = t^{-\omega \cdot \alpha} x^{\alpha}$. Therefore there exist $\mathbb{G}_m$-actions on the Hilbert scheme $\hihilb$ and on the Grassmannian $\grass$ respectively such that $\hihilb \hookrightarrow \grass$ is $\gm$-equivariant. Moreover, we also obtain a $\gm$-action on the projective space $\mathbf{P} =\mathbb{P}(\wedge^{\binom{n+r}{r}-P(r)} S_r)$ such that the \plu\ embedding $\grass \hookrightarrow \mathbf{P}$ is $\gm$-equivariant.

\begin{proposition}\label{prop.gm-equiv}
If $J \in \mpn$, then the morphism $\hgrhilb \rightarrow \hihilb$ is a $\mathbb{G}_m$-equivariant morphism. 
\end{proposition}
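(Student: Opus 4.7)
The plan is to verify the $\gm$-equivariance on functors of points by writing down the universal family over $\hgrhilb$ explicitly and comparing the two actions directly. First, I would recall from Proposition \ref{prop.coordinate_of_grobner_scheme} and Proposition \ref{prop.trunc_of_sat} that the ambient affine space of $\hgrhilb$ is $\spec R$ with $R = k[T_{\alpha,\beta} \mid \alpha \in \cd,\ \beta \in \Delta_r,\ \alpha \succ \beta]$, and that every corner in $\cd$ lies in $(\mon)_r$ because $J = J_\Delta \in \mpn$. Consequently the universal reduced Gröbner basis over $\hgrhilb$ is concentrated in degree $r$ and consists of
\[ g_{\alpha} = x^{\alpha} - \sum_{\beta \in \Delta_r,\, \alpha \succ \beta} T_{\alpha,\beta}\, x^{\beta} \in R[x]_r,\qquad \alpha \in \cd. \]
In particular the ideal $\langle G \rangle$ equals its own $r$-truncation, so the morphism $\hgrhilb \to \hihilb$ sends a $k$-algebra map $\varphi : R \to A$ with $\varphi(T_{\alpha,\beta}) = c_{\alpha,\beta}$ to the closed subscheme of $\mathbb{P}^n_A$ cut out by $\{g_\alpha\}$ after specializing the coefficients via $\varphi$.

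Next, I would compare the two actions on such an $A$-point. The $\gm$-action on $\hgrhilb$ determined by the grading $\deg T_{\alpha,\beta} = \omega \cdot \alpha - \omega \cdot \beta$ sends $\varphi$ to $t \cdot \varphi$, whose associated Gröbner basis is
\[ g'_{\alpha} = x^{\alpha} - \sum_{\beta} t^{\omega \cdot \alpha - \omega \cdot \beta}\, c_{\alpha,\beta}\, x^{\beta}. \]
The $\gm$-action on $S$ via $t \cdot x^{\gamma} = t^{-\omega \cdot \gamma} x^{\gamma}$ yields
\[ t \cdot g_{\alpha} = t^{-\omega \cdot \alpha} x^{\alpha} - \sum_{\beta} c_{\alpha,\beta}\, t^{-\omega \cdot \beta}\, x^{\beta} = t^{-\omega \cdot \alpha}\, g'_{\alpha}. \]
Since $t^{-\omega \cdot \alpha}$ is invertible in $A[t,t^{-1}]$, this identity shows $\langle t \cdot G \rangle = \langle G' \rangle$ as homogeneous ideals in $A[t,t^{-1}][x]$, so the image in $\hifunc$ of $t \cdot \varphi$ coincides with the $\gm$-translate of the image of $\varphi$.

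The only step that needs care is to confirm that the $\gm$-action on $\hihilb$ inherited from the chain $\hihilb \hookrightarrow \grass \hookrightarrow \mathbf{P}$ is really the one given by $I \mapsto t \cdot I$ on $r$-truncated ideals. I would treat this by writing the degree-$r$ piece $\langle G \rangle_r$ as the row span of the coefficient matrix of the $g_{\alpha}$, so that the Plücker coordinates are its maximal minors; rescaling the $\alpha$-th row by $t^{\omega \cdot \alpha}$ multiplies every such minor by the common invertible factor $\prod_{\alpha \in \cd} t^{\omega \cdot \alpha}$, hence does not change the projective point, and in fact $\langle t \cdot G \rangle_r = \langle G' \rangle_r$ already as submodules of $A[t,t^{-1}][x]_r$. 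Combined with the ideal-level identity above, this yields the $\gm$-equivariance of $\hgrhilb \to \hihilb$. The main obstacle is bookkeeping rather than substance: keeping straight the signs in the weights, the distinction between the negative grading on $S$ and the positive grading on $R$, and the rescaling needed to pass from generators of $\langle t \cdot G \rangle$ to generators of $\langle G' \rangle$. Once these conventions are fixed the argument is essentially a one-line calculation.
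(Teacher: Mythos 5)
Your proposal is correct and takes essentially the same route as the paper's proof: both apply the weight-$\omega$ action to the reduced \gr\ basis, observe that $t \cdot g_{\alpha}$ differs from the translated basis element $x^{\alpha} - \sum_{\beta} t^{\omega \cdot \alpha - \omega \cdot \beta} a_{\alpha,\beta} x^{\beta}$ only by the unit $t^{-\omega \cdot \alpha}$, and conclude that the resulting ideals, hence the corresponding points of $\hihilb$, coincide. Your additional check that the action on $\hihilb$ inherited from the Grassmannian is indeed $I \mapsto t \cdot I$ on $r$-truncated ideals is a detail the paper leaves implicit (it instead invokes $I = (I_{\mathrm{sat}})_{\geq r}$ and $(t \cdot I)_{\geq r} = t \cdot I_{\geq r}$), but this does not change the substance of the argument.
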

\begin{proof}
For each reduced \gr\ basis 
\[ G = \left\{ \left. g_{\alpha} = x^{\alpha} - \sum_{\beta \in \Delta} a_{\alpha, \beta} x^{\beta} \right| \alpha \in \cd\right\} \in \hgrfunc(A), \]
we have
\[t \cdot g_{\alpha} = t^{- \alpha \cdot \omega} x^{\alpha} - \sum_{\beta \in \Delta} t^{-\beta \cdot \omega} a_{\alpha, \beta} x^{\beta} \quad (t \in A^{\times})\]
under the $\mathbb{G}_m$-action on $A[x]$. Let $I$ be the ideal generated by $G$, and let $Y = \proj A[x]/I$. Then $t \cdot I = \{t\cdot f \mid f\in I \}$ is generated by the set $\{ x^{\alpha} - \sum_{\beta \in \Delta} t^{\alpha \cdot \omega - \beta \cdot \omega} a_{\alpha, \beta} x^{\beta} \mid \alpha \in \cd \}$. We have $(t \cdot I)_{\geq r} = t \cdot I_{\geq r}$ for any integer $r \geq 0$. Since $I = (I_{\mathrm{sat}})_{\geq r}$, we have
\[ t \cdot Y = \proj A[x]/(t \cdot I_{\mathrm{sat}}) = \proj A[x]/(t \cdot I_{\mathrm{sat}})_{\geq r} = \proj A[x]/(t \cdot I). \]
Thus the morphism $\hgrhilb \rightarrow \hihilb$ is a $\mathbb{G}_m$-equivariant morphism.
\end{proof}

From now on, we always attach the $\gm$-action on $\hihilb$ introduced in the above for given monomial order $\prec$.

\section{\bb\ schemes in the Hilbert scheme}

Let $X$ be a scheme locally of finite type over $k$ with a $\gm$-action. For any $k$-algebra $A$, we attach a $\gm$-action on $\spec A$ as the projection $\gm \times_k \spec A \rightarrow \spec A$. We also attach the trivial $\mathbb{G}_m$-action on $\mathbb{A}^1_k \times_k \spec A$ induced from the canonical $\gm$-action on $\mathbb{A}^1_k$.

The \textit{scheme of fixed points} \cite{Fog73} is defined as the subscheme $X^{\gm}$ such that for any $k$-algbera $A$,
\[ X^{\gm}(A) = \{ \varphi \in X(A) \mid \text{$\varphi$ is $\gm$-equivariant}\}. \]
The scheme of fixed points exists and it is a closed subscheme of $X$ \cite[Proposition1.2.2]{Dri13}.

We define the \textit{scheme of attractors} in $X$ as the scheme $X^{+}$ such that for any $k$-algebra $A$,
\[ X^{+}(A) \cong \{ \varphi : \mathbb{A}^1_k \times_k \spec A \rightarrow X \mid \text{$\varphi$ is $\gm$-equivariant} \}. \]
The scheme of attractors exists and it is locally of finite type over $k$ \cite[Corollary 1.4.3]{Dri13}, \cite[Theorem 6.17]{JS18}.

\begin{proposition}\label{prop.fixed_monomial}
The scheme of fixed points of the Hilbert scheme $\hihilb$ satisfies $(\hihilb)^{\gm}(K) = \{ \proj K[x]/(J \otimes_k K) \mid J \in \mpn \}$ for any field extension $k \subset K$. In particular, we have $X^{\gm} = \{ \proj S/J \mid J \in \mpn \}$ in set-theoretically.
\end{proposition}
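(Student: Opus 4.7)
The plan is to combine the Grassmannian embedding of Section 3 with the crucial fact (Proposition 4.1) that the weight vector $\omega$ separates all monomials of degree $r$.

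By the universal property of the scheme of fixed points, $(\hihilb)^{\gm}(K)$ consists of those $K$-points $\varphi \colon \spec K \to \hihilb$ which are $\gm$-equivariant when $\spec K$ carries the trivial $\gm$-action. Under the closed $\gm$-equivariant embedding $\hihilb \hookrightarrow \grass$ and the identification of $\hihilb(K)$ with ideals $I \subset K[x]$ satisfying $\ipn$, this amounts to asking that $V := I_r \subset K[x]_r$ be $\gm$-invariant as a $K$-subspace.

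Now the $\gm$-action on $S_r$ is diagonal in the monomial basis: $t \cdot x^{\alpha} = t^{-\omega \cdot \alpha} x^{\alpha}$. By Proposition 4.1 applied to the finite set $A = (\mon)_r$, the weights $\omega \cdot \alpha$ are pairwise distinct as $\alpha$ ranges over $(\mon)_r$. Hence $\{ x^{\alpha} \}_{\alpha \in (\mon)_r}$ is a basis of eigenvectors with pairwise distinct weights, so every $\gm$-invariant $K$-subspace is spanned by a subset $\{ x^{\alpha} \}_{\alpha \in \Gamma}$ with $\Gamma \subset (\mon)_r$. Setting $J = \langle x^{\alpha} \mid \alpha \in \Gamma \rangle \subset S$, we obtain $I = \langle V \rangle = J \otimes_k K$, and since dimensions of graded pieces are preserved by base change, the conditions of $\ipn$ descend from $I$ to $J$; hence $J \in \mpn$.

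The converse is immediate: for $J \in \mpn$ the ideal $J \otimes_k K$ is generated by monomials, each a $\gm$-eigenvector, so it is $\gm$-stable and gives a fixed $K$-point $\proj K[x]/(J \otimes_k K)$. The set-theoretic description of $X^{\gm}$ is recovered by letting $K$ range over residue fields of points of $\hihilb$. The main subtlety I anticipate is translating the functorial equivariance condition from \cite{Dri13, JS18} into the literal statement ``$t \cdot I = I$'' at the level of $K$-points; this is essentially a definitional unraveling, and the genuine content of the argument lies entirely in Proposition 4.1's weight separation on degree-$r$ monomials.
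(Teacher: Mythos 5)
Your proof is correct, but it follows a genuinely different route from the paper's. The paper stays inside the \gr\ framework: for a $K$-point $I$ it takes $J = \ini_{\prec} I \in \mpn$ (via Lemma \ref{lem.initial_sat_ipn}), writes the reduced \gr\ basis of $t \cdot I$, whose tail coefficients are $t^{\omega \cdot \alpha - \omega \cdot \beta} a_{\alpha, \beta}$ with $\omega \cdot \alpha > \omega \cdot \beta$, and uses the $\gm$-equivariance of $\hgrhilb \rightarrow \hihilb$ (Proposition \ref{prop.gm-equiv}) together with uniqueness of reduced \gr\ bases to conclude that fixedness forces every $a_{\alpha, \beta} = 0$, i.e.\ $I = J \otimes_k K$. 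You instead transport the question to the Grassmannian along the equivariant closed embedding (legitimate, since a closed immersion is a monomorphism, so fixedness can be tested after composing with it) and invoke the classical fact that an invariant subspace for a diagonal action with pairwise distinct weights is a coordinate subspace; Proposition \ref{prop.weight} supplies the distinctness for you, exactly as it supplies the strict inequality $\omega \cdot \alpha > \omega \cdot \beta$ for the paper. Your version avoids reduced \gr\ bases and Proposition \ref{prop.gm-equiv} entirely and is the standard torus-fixed-point computation on a Grassmannian, while the paper's version keeps the bookkeeping in the coordinates $a_{\alpha, \beta}$ of the \gr\ scheme, which it reuses later in Theorem \ref{thm.bb_is_grob}. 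One phrasing you should tighten: the fixed-point condition is not literally ``$t \cdot V = V$ for every $t \in K^{\times}$'' (for finite $K$ this is strictly weaker; $K^{\times}$ can even be trivial), and this is more than a definitional unraveling --- the correct functorial condition is that $V \otimes_K K[t, t^{-1}]$ is stable under the generic element $t$, equivalently that $V$ is a sub-comodule of $S_r \otimes_k K$. Your eigenvector argument goes through verbatim in that formulation (compare coefficients of powers of $t$ in the expansion of $t \cdot v$), so this is a wording fix, not a gap; note that the paper's own proof, which quantifies over $t \in K \setminus \{0\}$, glosses over the same point.
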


\begin{proof}
Let $I$ be a homogeneous ideal that is the $r$-truncation of a saturated ideal defining an element of $\hifunc(K)$. Then there exists a monomial ideal $J \in \mpn$ such that $\ini_{\prec} I = J \otimes_k K$. Let $\Delta$ be the standard set attached to $J$. The reduced \gr\ basis of $t \cdot I$ $(t \in K \setminus \{0\})$ is in the following form:
\[ G = \left\{ \left. x^{\alpha} - \sum_{\beta \in \Delta} t^{\omega \cdot \alpha - \omega \cdot \beta} a_{\alpha, \beta} x^{\beta} \right| \alpha \in \cd \right\}. \]
Therefore $\proj K[x]/I$ is a fixed point if and only if $a_{\alpha, \beta} = 0$ for any $\alpha \in \cd$ and $\beta \in \Delta$ since $\hgrhilb \rightarrow \hihilb$ is $\gm$-equivariant (Proposition \ref{prop.gm-equiv}).
\end{proof}

We obtain canonical maps by taking restrictions to $1$:
\[ i_{X}: X^{+}(A) \rightarrow X(A) \]
\[ \varphi \mapsto \varphi_{| \{1\} \times_k \spec A }. \]
If $X$ is separated, then this map is an injection for each $A$ \cite[Proposition 1.4.11]{Dri13}.
We also obtain maps by taking restrictions to $0$:
\[ \pi_X : X^{+}(A) \rightarrow X^{\gm}(A) \]
\[ \varphi \mapsto \varphi_{| \{0\} \times_k \spec A}. \]
This morphism $\pi_X : X^{+} \rightarrow X^{\gm}$ is $\gm$-equivariant and affine of finite type \cite[Theorem 6.17]{JS18}.

We describe the connected components of $X^{\gm}$ by $F_1, \ldots, F_r$. The \textit{\bb\ schemes} are defined as the preimages of components under $\pi_X$. More precisely, the \bb\ scheme $X_i^{+}$ is the subscheme of $X^{+}$ such that
\[ X_i^{+}(A) = \{ \varphi \in X^{+}(A) \mid \pi_{X}(\varphi) \in F_i(A) \}. \]
The right side set is the sections of the \textit{\bb\ functor}. For short, we call these by BB scheme and BB functor respectively.

If $X$ is the Hilbert scheme $\hihilb$, then each connected component of $X^{\gm}$ is a point corresponding to a monomial ideal in $\mpn$. Thus we denote the BB functor and the BB scheme for $J \in \mpn$ by $\bbfunc$ and $\bbhilb$ respectively.

\begin{theorem}\label{thm.bb_is_grob}
Let $J$ be an element of $\mpn$. Then for any field extension $k \subset K$, we have
\[ \bbfunc(K) = \hgrfunc(K) \]
in the Hilbert functor $\hifunc(K)$. Namely, $\bbhilb = \hgrhilb$ in set-theoretically.
\end{theorem}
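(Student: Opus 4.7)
The plan is to establish both set-theoretic inclusions $\hgrfunc(K) \subseteq \bbfunc(K)$ and $\bbfunc(K) \subseteq \hgrfunc(K)$ by combining the explicit Gröbner degeneration already present in the proof of Proposition \ref{prop.gm-equiv} with the separateness of $\hihilb$.

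For $\hgrfunc(K) \subseteq \bbfunc(K)$, I would start from a reduced Gröbner basis $\{x^\alpha - \sum_{\beta \in \Delta_r} a_{\alpha,\beta} x^\beta \mid \alpha \in \cd\}$ of $I$, noting that $a_{\alpha,\beta} = 0$ unless $\alpha \succ \beta$, so by Proposition \ref{prop.weight} every exponent $\omega \cdot \alpha - \omega \cdot \beta$ appearing with nonzero coefficient is strictly positive. Treating $t$ as an indeterminate, I consider the ideal in $K[t][x]$ generated by
\[ \left\{ x^\alpha - \sum_{\beta \in \Delta_r} t^{\omega \cdot \alpha - \omega \cdot \beta} a_{\alpha, \beta} x^\beta \;\middle|\; \alpha \in \cd \right\}. \]
The calculation in the proof of Proposition \ref{prop.gm-equiv} already shows this family is $\gm$-equivariant with respect to the standard action on $\mathbb{A}^1_K$, specializes to $J \otimes_k K$ at $t = 0$, and recovers a $\gm$-translate of $I$ at $t = 1$. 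Since the prescribed generators are monic with pairwise distinct leading monomials $\{x^\alpha\}_{\alpha \in \cd}$ independent of $t$, they form a $K[t]$-basis of the $r$-th graded piece of the ideal, so the family defines a morphism $\mathbb{A}^1_K \to \grass$. The image of $\mathbb{A}^1_K \setminus \{0\}$ lies in $\hihilb$ (each such fiber is a $\gm$-translate of $\proj K[x]/I \in \hihilb(K)$), and $\hihilb$ is closed in $\grass$, so the whole morphism factors through $\hihilb$, producing an element of $\bbfunc(K)$ whose value at $t = 1$ is $\proj K[x]/I$.

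For the reverse inclusion, I would take $Y \in \bbfunc(K)$, write $Y = \proj K[x]/I$ with $I$ satisfying $\ipn$, and set $J' := \ini_\prec I$; by Lemma \ref{lem.initial_sat_ipn}, $J' \in \mpn$. The reduced Gröbner basis of $I$ lies in the Gröbner functor associated to $J'$, so by the first step $Y$ arises from a $\gm$-equivariant morphism $\mathbb{A}^1_K \to \hihilb$ with value $\proj K[x]/J'$ at $t = 0$. On the other hand, $Y \in \bbfunc(K)$ provides a second equivariant extension with the same value at $t = 1$ but value $\proj K[x]/J$ at $t = 0$. Since $\hihilb$ is projective, hence separated, the restriction $i_{\hihilb}$ is injective, so the two extensions coincide and $\proj K[x]/J = \proj K[x]/J'$ in $\hihilb(K)$. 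Because the map $\mpn \ni J \mapsto \proj K[x]/J$ is injective (the degree-$r$ part of $J$ is recovered from its image in the Grassmannian), this forces $J = J'$, and hence $Y \in \hgrfunc(K)$.

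The only nontrivial step will be the verification, in the first inclusion, that the one-parameter family genuinely defines a morphism $\mathbb{A}^1_K \to \hihilb$ rather than merely a compatible collection of $K$-points; once that is granted, the rest is formal manipulation of the BB definition combined with separateness. The argument I sketched, via a $K[t]$-basis of the degree-$r$ piece together with the closedness of $\hihilb \subset \grass$, should suffice, as it amounts to computing Plücker coordinates as polynomials in $t$ with controlled leading behaviour.
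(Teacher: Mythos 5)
Your proposal is correct and follows essentially the same route as the paper: the inclusion $\hgrfunc(K) \subseteq \bbfunc(K)$ via the Gr\"obner degeneration (which the paper simply cites from Bayer, and you reconstruct explicitly through the Grassmannian and closedness of $\hihilb$), and the reverse inclusion by comparing the two equivariant extensions of the same point at $t=1$ using injectivity of the restriction map $(\hihilb)^{+}(K) \to \hihilb(K)$ coming from separatedness. The only cosmetic slip is that at $t=1$ your family recovers $I$ itself, not merely a $\gm$-translate of it.
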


\begin{proof}
Taking \gr\ degenerations \cite[Proposition 2.12]{Bay82}, $\hgrhilb$ is a subscheme of $\bbhilb$. Then we obtain $\hgrfunc(K) \subset \bbfunc(K)$. Conversely, for any $\varphi \in \bbfunc(K)$, put $Y = \varphi_{|\{1\}} \in \hifunc(K)$ and assume that $Y \in \mathcal{G}\mathrm{r\text{\"o}b}_{\prec,h}^{J'}(K)$ with $J' \in \mpn$. Then taking the \gr\ degeneration of $Y$, there exists a $\gm$-equivariant morphism $\psi : \mathbb{A}^1_K \rightarrow \hihilb$ such that $\psi_{| \{1\}} = Y$ and $\psi_{| \{0\}} = \proj K[x]/J'$. Since $({\hihilb})^{+} \rightarrow \hihilb$ is monomorphism, we obtain $\varphi = \psi$. Hence $J=J'$.
\end{proof}

\section{Smoothness}

Let $X$ still be a scheme locally of finite type over $k$ with a $\gm$-action. We recall the following \bb's result.

\begin{theorem}\label{thm.bb_decomp}
{\rm (\cite{BB73,BB76}, see also \cite{Dri13,JS18})} Let $X$ be a smooth projective scheme over an algebraically closed field $k$ with a $\gm$-action. We assume that $X^{\gm}$ is $0$-dimensional. Then there exist closed subschemes $Z_0 \supset \cdots \supset Z_q$ such that
\begin{itemize}
\item $Z_0 = X$ and $Z_q = \emptyset$,
\item each $Z_i \setminus Z_{i+1}$ is a BB scheme in $X$,
\item any BB scheme is isomorphic to an affine space over $k$.
\end{itemize}
Therefore $X$ has a cell decomposition.
\end{theorem}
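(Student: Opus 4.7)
The plan is to follow Białynicki-Birula's classical argument in the formulation of \cite{Dri13, JS18}. Because $X$ is proper and $X^{\gm}$ is zero-dimensional, the fixed locus is a finite reduced set $\{a_1, \ldots, a_s\}$, and for every $x \in X$ the orbit morphism $\gm \rightarrow X$, $t \mapsto t \cdot x$, extends uniquely (valuative criterion) to a morphism $\mathbb{A}^1_k \rightarrow X$. Hence $\lim_{t \rightarrow 0} t \cdot x \in X^{\gm}$ always exists and the BB schemes $X_i^+$ partition the underlying topological space as $|X| = \coprod_{i=1}^{s} |X_i^+|$. Existence, local finite-typeness, and the affine attractor morphism $\pi_X \colon X_i^+ \rightarrow \{a_i\}$ are already supplied by the results cited from \cite{Dri13, JS18}.

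Next I would analyze the local geometry at each fixed point. Since $X$ is smooth at $a_i$ and $X^{\gm}$ is smooth at $a_i$ with tangent space $(T_{a_i} X)^{\gm}$, the zero-dimensional assumption forces $(T_{a_i} X)^{\gm} = 0$, so the weight decomposition splits as
\[ T_{a_i} X = T_{a_i}^{+} \oplus T_{a_i}^{-} \]
with no weight-zero component. Applying Sumihiro's equivariant linearization theorem, one finds a $\gm$-stable affine neighborhood of $a_i$ together with a $\gm$-equivariant étale map to $T_{a_i} X$ sending $a_i$ to the origin. On the linear model the attractor is manifestly the positive-weight subspace $T_{a_i}^{+}$, which is an affine space; descending this identification along the étale map and using representability of $X_i^+$ from \cite{Dri13, JS18} then yields $X_i^+ \cong \mathbb{A}^{\dim T_{a_i}^+}_k$.

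To produce the filtration $Z_0 \supset Z_1 \supset \cdots$, I would order the fixed points so that $X_j^+ \subset \overline{X_i^+}$ implies $j \geq i$; such an ordering exists because the closure of $X_i^+$ is $\gm$-stable, so meets $X^{\gm}$ in a union of $a_j$'s, and the resulting relation generates a genuine partial order (antisymmetry comes from dimension considerations on the cells). Setting $Z_i = \bigcup_{j \geq i} \overline{X_j^+}$ with the reduced induced structure gives $Z_0 = X$, $Z_{s+1} = \emptyset$, and each successive difference $Z_i \setminus Z_{i+1}$ is the single BB cell $X_i^+$, which by the previous step is an affine space.

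The main obstacle is the local linearization step. Sumihiro's theorem delivers a $\gm$-stable affine chart, but producing a genuine $\gm$-equivariant étale trivialization near $a_i$ requires an equivariant version of the implicit function theorem, legitimate because $\gm$ is linearly reductive and $a_i$ is a smooth fixed point. A secondary technical point is upgrading the set-theoretic partition to a scheme-theoretic stratification by closed subschemes; here the representability and closedness properties of the attractor functor in \cite{Dri13, JS18} are exactly what allow the reduced structure on $Z_i$ to cut out the expected cells, and without these modern inputs the original arguments of \cite{BB73, BB76} would need to be revisited directly.
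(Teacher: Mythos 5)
First, a point of comparison: the paper does not prove Theorem \ref{thm.bb_decomp} at all. It is quoted from \cite{BB73,BB76} (with \cite{Dri13,JS18} for the scheme-theoretic formulation), and the only ingredient the paper re-uses later, filtrability, is again attributed to \cite[Theorem 3]{BB76} in Proposition \ref{prop.filtrable}. So your proposal has to be judged as a reconstruction of the classical argument, and as such it has a genuine gap, located exactly at the filtration step.

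You build the ordering of fixed points from the relation ``$X_j^{+} \subset \overline{X_i^{+}}$'' and claim antisymmetry ``by dimension considerations''. This is both the wrong relation and an insufficient argument. The closure of a BB cell need not be a union of cells (the decomposition is filtrable but in general not a stratification), so your construction silently assumes a property that can fail; what is actually needed for $Z_i = \bigcup_{j \geq i} \overline{X_j^{+}}$ to be closed with $Z_i \setminus Z_{i+1} = X_i^{+}$ is an ordering compatible with the incidence relation ``$\overline{X_i^{+}} \cap X_j^{+} \neq \emptyset$'', and acyclicity of \emph{that} relation is the whole content of the theorem's filtration statement. Dimension considerations cannot supply it: incidence of $\overline{X_i^{+}}$ with $X_j^{+}$ imposes no inequality between $\dim X_i^{+}$ and $\dim X_j^{+}$, since the closure of a low-dimensional cell may meet a higher-dimensional one. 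The classical proof --- the one the paper invokes in Section 7, stressing that ``the proof only uses the existence of a $\gm$-equivariant embedding'' --- gets the order from a $\gm$-equivariant embedding $X \hookrightarrow \prnn$ with linearized action: each fixed point lies in a single weight space; for $x \in X_i^{+}$ all coordinates of weight below that of $a_i$ vanish, which is a closed condition and hence persists on $\overline{X_i^{+}}$; a short extra argument rules out incidences between distinct cells of equal weight, so $\overline{X_i^{+}}$ meets only cells of strictly larger weight, and ordering by weight yields the filtration. Without this (or a substitute) your $Z_i$ are not known to be closed, nor their differences to be single cells. A secondary issue is the linearization step: Sumihiro's theorem gives a $\gm$-stable affine chart, but the equivariant \'etale map to $T_{a_i}X$ and, more importantly, the identification of the cell with $T_{a_i}^{+}$ (\'etaleness alone does not give bijectivity onto the attractor) is precisely \cite[Corollary 7.3]{JS18}, which the paper quotes separately as Theorem \ref{thm.smoothness_on_BB}; importing it is legitimate, but then your ``main obstacle'' paragraph describes a theorem you are citing, not a step you have proved.
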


Here we say that a sequence of closed subschemes $Z_0 \supset \cdots \supset Z_q$ is a \textit{cell decomposition} of $X$ \cite{Ful98} if
\begin{itemize}
\item $Z_0 = X$ and $Z_q = \emptyset$,
\item each $Z_i \setminus Z_{i+1}$ is the disjoint sum of schemes isomorphic to affine spaces.
\end{itemize}

The above \bb's result is generalized as follows.

\begin{theorem}\label{thm.smoothness_on_BB}
{\rm (\cite[Corollary 7.3]{JS18}). } Suppose that $X$ is smooth over $k$. Then $\pi_X : X^{+} \rightarrow X^{\gm}$ is an affine fiber bundle. Moreover, both $X^{\gm}$ and $X^{+}$ are smooth.
\end{theorem}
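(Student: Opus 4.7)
The plan is to reduce to a local analysis around each fixed point via an equivariant formal-smoothness argument, and then verify the statement on the resulting linear model, which is a $\mathbb{G}_m$-representation. First I would show that the formations $X \mapsto X^{\mathbb{G}_m}$ and $X \mapsto X^{+}$ commute with $\mathbb{G}_m$-equivariant étale base change; this follows directly from their functorial definitions as sections of certain equivariant mapping schemes. Since smoothness and the affine-fiber-bundle property are étale-local on the target, it suffices to produce a $\mathbb{G}_m$-equivariant étale model of $X$ in a neighborhood of each point of $X^{\mathbb{G}_m}$.

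Next, at a closed point $x \in X^{\mathbb{G}_m}$, the tangent space $T_x X$ inherits a linear $\mathbb{G}_m$-action, so it splits into weight spaces
\[ T_x X = T^{+} \oplus T^{0} \oplus T^{-}, \quad T^{+} = \bigoplus_{n>0} T_n, \ T^{0} = T_0, \ T^{-} = \bigoplus_{n<0} T_n. \]
The crucial step is to construct a $\mathbb{G}_m$-equivariant étale morphism from an open neighborhood of $x$ to $T_x X$ sending $x$ to $0$. Because $X$ is smooth of dimension $d$ at $x$, the completed local ring $\widehat{\mathcal{O}}_{X,x}$ is isomorphic to $k[[t_1, \ldots, t_d]]$, and the key input is that this isomorphism can be chosen so that each $t_i$ is a $\mathbb{G}_m$-weight vector. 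An equivariant Artin-approximation argument (in the form used in \cite{Dri13, JS18}) then algebraizes this into an equivariant étale neighborhood of $x$ whose target is the tangent representation.

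Once the linear model is in hand, a direct computation on $V = T^{+} \oplus T^{0} \oplus T^{-}$ finishes the theorem. The orbit $t \mapsto t \cdot v$ extends to a $\mathbb{G}_m$-equivariant morphism $\mathbb{A}^{1} \to V$ precisely when the $T^{-}$-component of $v$ vanishes, and the limit at $t=0$ is the $T^{0}$-component. Hence $V^{\mathbb{G}_m} = T^{0}$ and $V^{+} = T^{+} \oplus T^{0}$, and $\pi_V : V^{+} \to V^{\mathbb{G}_m}$ is the trivial vector bundle projection $T^{+} \oplus T^{0} \to T^{0}$. Both $V^{+}$ and $V^{\mathbb{G}_m}$ are manifestly smooth affine spaces.

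Transferring back through the equivariant étale cover, $\pi_X : X^{+} \to X^{\mathbb{G}_m}$ becomes étale-locally the projection $T^{+} \oplus T^{0} \to T^{0}$, so it is a Zariski-locally trivial affine fiber bundle, and both $X^{\mathbb{G}_m}$ and $X^{+}$ are smooth. The main obstacle is the middle step: producing equivariant étale coordinates on a smooth scheme that is only locally of finite type, with no projectivity or properness assumption. This is exactly where the smoothness hypothesis is indispensable, and where classical \bb\ arguments must be replaced by the Drinfeld--Jelisiejew--Sienkiewicz framework invoked in \cite{Dri13, JS18}.
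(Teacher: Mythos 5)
The paper offers no proof of this statement to compare against: Theorem \ref{thm.smoothness_on_BB} is imported verbatim from \cite[Corollary 7.3]{JS18}, so the only meaningful benchmark is the proof in that reference (and in \cite{Dri13}). Your outline is, in effect, a sketch of that proof: reduce by $\gm$-equivariant \'etale base change to the tangent representation at a fixed point, compute the attractor and fixed locus of a linear model $V = T^{+} \oplus T^{0} \oplus T^{-}$ (your identification $V^{\gm} = T^{0}$, $V^{+} = T^{+} \oplus T^{0}$, with $\pi_V$ the projection, is correct), and transfer back. So the skeleton is the right one.

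As a standalone proof, however, the proposal has genuine gaps at exactly the decisive points. First, the assertion that the attractor functor commutes with equivariant \'etale base change ``follows directly from the functorial definitions'' is not right: for the fixed-point functor this is an easy connectedness argument, but for attractors one must show that an equivariant map $\mathbb{A}^1_A \to X$ lifts, uniquely, along an equivariant \'etale $U \to X$ as soon as its restriction to the zero section lifts. The zero section is not a nilpotent thickening of $\mathbb{A}^1_A$, so this is not the infinitesimal lifting property of \'etale morphisms; it is a theorem in \cite{JS18} whose proof genuinely uses the contracting structure of the action. Second, the ``equivariant Artin approximation'' step --- producing a $\gm$-equivariant \'etale morphism from a neighborhood of $x$ to $T_x X$ --- is the technical heart of \cite{JS18}: plain Artin approximation does not respect the group action, and the equivariant local-model theorem is precisely what \cite{JS18} establish on the way to Corollary 7.3. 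Invoking it ``in the form used in \cite{Dri13, JS18}'' therefore makes your argument circular as an independent proof; if you are allowed to quote the internal results of \cite{JS18}, you may as well quote Corollary 7.3 itself. Third, the final jump from an \'etale-local product description to a ``Zariski-locally trivial affine fiber bundle'' is unjustified as written: an affine-space fibration that is \'etale-locally a projection is not obviously Zariski-locally trivial, and closing this gap requires either the graded-algebra (cone) argument or a specialness-of-structure-group argument. In short, the three steps you defer to the framework are not routine reductions --- together they \emph{are} the theorem.
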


The next purpose is to apply the above theorems to our Hilbert scheme and \gr\ schemes.

\begin{lemma}\label{lem.dimensions}
Let $A$ and $B$ be Noetherian local $k$-algebras with residue field $k$. Assume that $B$ is regular and there exists a $k$-morphism $\varphi : B \rightarrow A$ such that $\varphi$ induces bijections $l.\Hom_k (A, K) \rightarrow l.\Hom_k (B, K)$ for any filed extension $k \subset K$. Here we denote by $l.\Hom_k$ the local ring $k$-morphisms. Then $\dim A \geq \dim B$.  
\end{lemma}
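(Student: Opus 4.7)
The plan is to pass to completions, invoke Cohen's structure theorem on the regular ring $B$, and then transport a saturated chain of primes of $\hat B$ to a chain of primes of $\hat A$ using the bijection hypothesis.

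First I would replace $A$ and $B$ by their $\mathfrak m$-adic completions $\hat A$ and $\hat B$. Since completion preserves both the Krull dimension and the residue field for Noetherian local rings, it suffices to prove $\dim \hat A \geq \dim \hat B$. The map $\varphi$ extends by continuity to a local $k$-morphism $\hat\varphi \colon \hat B \to \hat A$, and the bijection of local Hom sets to field extensions transports to $\hat\varphi$ because such local $k$-morphisms to a field factor through the residue-field quotient at the closed point, which is unchanged by completion.

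Next, by Cohen's structure theorem, the complete regular Noetherian local $k$-algebra $\hat B$ with residue field $k$ is isomorphic to $k[[y_1, \ldots, y_d]]$, where $d = \dim B$. In particular, $\hat B$ admits a saturated chain of primes $\mathfrak p_0 \subset \mathfrak p_1 \subset \cdots \subset \mathfrak p_d = \mathfrak m_{\hat B}$ of length $d$. For each $i$, the canonical morphism $\hat B \to \kappa(\mathfrak p_i)$ pulls back uniquely via the bijection to a morphism $\psi_i \colon \hat A \to \kappa(\mathfrak p_i)$ whose composition with $\hat\varphi$ is the canonical map. Setting $\mathfrak q_i := \ker \psi_i$ produces pairwise distinct primes of $\hat A$ with $\hat\varphi^{-1}(\mathfrak q_i) = \mathfrak p_i$.

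The main task is then to verify that $\mathfrak q_0 \subset \mathfrak q_1 \subset \cdots \subset \mathfrak q_d$ is an honest chain in $\hat A$. To establish each inclusion $\mathfrak q_i \subset \mathfrak q_{i+1}$, I would use the local domain $R_i := \hat B_{\mathfrak p_{i+1}}/\mathfrak p_i \hat B_{\mathfrak p_{i+1}}$, whose fraction field is $\kappa(\mathfrak p_i)$ and whose residue field is $\kappa(\mathfrak p_{i+1})$, together with the uniqueness part of the bijection applied to the two natural morphisms $\hat B \to R_i \to \kappa(\mathfrak p_i)$ and $\hat B \to R_i \to \kappa(\mathfrak p_{i+1})$; bridging these two field-valued lifts through $R_i$ should force the containment. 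Because the bijection is stated for fields only, this monotonicity step is the main obstacle; I expect to handle it by working inside a sufficiently large algebraically closed field $K$ containing both $\kappa(\mathfrak p_i)$ and $\kappa(\mathfrak p_{i+1})$ and applying the uniqueness half of the bijection there. Once the chain is in place, $\dim \hat A \geq d$ follows from the definition of Krull dimension, whence $\dim A \geq \dim B$.
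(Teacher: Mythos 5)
Your overall strategy is the same as the paper's (complete, invoke Cohen's structure theorem, lift a maximal chain of primes of $\hat B$ through the bijection, conclude $\dim \hat A \geq \dim \hat B$), and your construction of the lifted primes $\mathfrak{q}_i$ matches the paper's construction of the kernels $\ker \rho_i$. But the step you yourself call ``the main obstacle'' --- the inclusions $\mathfrak{q}_i \subseteq \mathfrak{q}_{i+1}$ --- is the entire mathematical content of the lemma, and the device you propose for it does not work. Embedding $\kappa(\mathfrak{p}_i)$ and $\kappa(\mathfrak{p}_{i+1})$ into one large algebraically closed field $K$ produces two maps $\hat B \to K$, one with kernel $\mathfrak{p}_i$ and one with kernel $\mathfrak{p}_{i+1}$; these are two \emph{different} elements of $l.\Hom_k(\hat B, K)$, so the uniqueness half of the bijection applies to each of them separately and returns two lifts $\hat A \to K$, with kernels $\mathfrak{q}_i$ and $\mathfrak{q}_{i+1}$ and no relation whatsoever between them. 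A bijection of $K$-points for fixed $K$ is a bijection of bare sets; it cannot see inclusions of kernels. What is needed is a \emph{single} field-valued point interacting with both primes, i.e.\ the two canonical points of $\hat B$ must be connected by an actual morphism along which the bijection is natural --- for instance, compose $\hat A \twoheadrightarrow \hat A/\mathfrak{q}_i \subseteq \kappa(\mathfrak{p}_i)$ with the residue map of a valuation ring of $\kappa(\mathfrak{p}_i)$ centered over the maximal ideal of your ring $R_i$ and containing the image of $\hat A$; the composite with $\hat\varphi$ is then the canonical point at $\mathfrak{p}_{i+1}$ followed by a field embedding, so uniqueness identifies its kernel with $\mathfrak{q}_{i+1}$, while by construction it contains $\mathfrak{q}_i$. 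This connecting device is exactly the role played in the paper's proof by the morphisms $\eta_i$ between the fields $K_i$ together with the commutative Hom-square (naturality of the bijection in the field variable), which is what yields $\ker \rho_i \subseteq \ker \rho_{i+1}$ there. You correctly identify $R_i$ as the relevant intermediate object, but since the hypothesis gives you only field-valued points, $R_i$ by itself is unusable, and your proposal contains no substitute for the paper's connecting morphisms; the chain property, hence the lemma, is left unproved.

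There is also a secondary error: your justification for passing to completions is wrong. You claim local $k$-morphisms to a field ``factor through the residue-field quotient at the closed point.'' Under that reading $l.\Hom_k(A,K)$ would be a single point for every $K$ (every such map would be $A \to k \hookrightarrow K$), the hypothesis of the lemma would be vacuous, and the maps $\hat B \to \kappa(\mathfrak{p}_i)$ you use later --- whose kernels are the non-maximal primes $\mathfrak{p}_i$ --- would not be ``local morphisms'' at all, contradicting your own usage. The intended meaning, and the one the paper's proof uses, is arbitrary $k$-algebra homomorphisms out of the local ring; with that meaning a homomorphism $A \to K$ need not extend to $\hat A$ (maps to a field are not continuous for the $\mathfrak{m}$-adic topology), so bijectivity of $\hat\varphi^{\ast}$ is a genuine claim requiring an argument, not a formality; the paper asserts it directly for the fields $K_i$, but your stated reason for it is not valid.
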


\begin{proof}
Let $\hat{A}$ and $\hat{B}$ be the completion of $A$ and $B$ respectively. Then $\hat{B} \cong k[[z_1, \ldots, z_m]]$ for some $m \in \mathbb{N}$ by Cohen's structure theorem. Let $K_i$ be the fraction field of $k[[z_1, \ldots, z_i]]$. There exist canonical morphisms $\psi_i : \hat{B} \rightarrow K_i$ and $\eta_i : K_{i+1} \rightarrow K_i$. Since $\varphi^{\ast} : l.\Hom_k (A, K_i) \rightarrow l.\Hom_k (B, K_i)$ is bijective, $\hat{\varphi}^{\ast} : l.\hom_k (\hat{A}, K_i) \rightarrow l.\Hom_k (\hat{B}, K_i)$ is also bijective. Then there uniquely exists a morphism $\rho_i : \hat{A} \rightarrow K_i$ such that $\psi_i  = \rho_i \circ \hat{\varphi}$. Since the diagram

\[ \xymatrix{
l.\Hom_k (\hat{A}, K_{i+1}) \ar[r] \ar[d] & l.\Hom_k(\hat{B},K_{i+1}) \ar[d] \\
l.\Hom_k (\hat{A}, K_i) \ar[r] & l.\Hom_k (\hat{B}, K_i) 
} \]
is commutative, the diagram
\[ \xymatrix{
\hat{A} \ar[r]^{\rho_{i+1}} \ar[rd]_{\rho_{i}} & K_{i+1} \ar[d]^{\eta_i} \\
 & K_{i}
} \]
is also commutative. Then $\kak \rho_i \subset \kak \rho_{i+1}$ for each $i$ with $0 \leq i \leq m$. We have $\hat{\varphi}(z_{i+1}) \in \kak \rho_{i+1} \setminus \kak \rho_i$, thus the sequence $\kak \rho_{0} \subset \kak \rho_{1} \subset \cdots \subset \kak \rho_{m}$ is a strictly ascending chain of prime ideals. Therefore $\dim A = \dim \hat{A} \geq m = \dim B$.
\end{proof}

\begin{theorem}\label{thm.smoothness_on_gr}
For any $J \in \mpn$, if $\bbhilb$ is smooth at $\proj S/J \in \bbhilb(k)$, then $\hgrhilb$ is isomorphic to an affine space.
\end{theorem}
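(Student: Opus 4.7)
The plan is to first upgrade smoothness of $\bbhilb$ at $o := \proj S/J$ to smoothness of $\hgrhilb$ at $o$, and then exploit that $\hgrhilb$ sits $\gm$-equivariantly inside a strictly positively graded affine space with $o$ at the origin to conclude that it is a linear subspace.

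For the first step, I will first construct a natural morphism $h : \hgrhilb \to \bbhilb$ via Gr\"obner degeneration \cite[Proposition 2.12]{Bay82}: to $G \in \hgrfunc(A)$ the degeneration associates a $\gm$-equivariant map $\mathbb{A}^1_k \times_k \spec A \to \hihilb$ with limit the constant family on $o$, which is an $A$-point of $\bbhilb$. The composition of $h$ with $\bbhilb \hookrightarrow \hihilb$ recovers the locally closed immersion of Proposition \ref{prop.locally_closed}, so $h$ is a monomorphism, hence formally unramified, and in particular injective on Zariski tangent spaces at $o$. I will then apply Lemma \ref{lem.dimensions} to $A := \mathcal{O}_{\hgrhilb, o}$ and $B := \mathcal{O}_{\bbhilb, o}$: the regularity of $B$ is the hypothesis of the theorem, and the required bijection on local $k$-morphisms will follow from the field-point equality $\hgrfunc(K) = \bbfunc(K)$ supplied by Theorem \ref{thm.bb_is_grob}. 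Chaining the resulting $\dim A \geq \dim B$ with the standard $\dim A \leq \dim_k T_o \hgrhilb$, the tangent-space injection $\dim_k T_o \hgrhilb \leq \dim_k T_o \bbhilb$, and the smoothness equality $\dim_k T_o \bbhilb = \dim B$ gives
\[ \dim B \leq \dim A \leq \dim_k T_o \hgrhilb \leq \dim_k T_o \bbhilb = \dim B, \]
forcing all four quantities to coincide, so $\mathcal{O}_{\hgrhilb, o}$ is regular.

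For the second step, by Proposition \ref{prop.coordinate_of_grobner_scheme} the scheme $\hgrhilb$ is a $\gm$-invariant closed subscheme of $\spec R$, where $R = k[T_{\alpha,\beta}]$ carries the \emph{strictly positive} grading $\deg T_{\alpha,\beta} = \omega \cdot \alpha - \omega \cdot \beta > 0$ (Proposition \ref{prop.weight} combined with $\alpha \succ \beta$), and $o$ sits at the origin. The defining ideal $I \subset R$ is therefore homogeneous. A graded linear change of coordinates lets me assume the degree-one component of $I$ is $(T_1, \ldots, T_c)$, where $c$ is the codimension of $\hgrhilb$ at $o$; then $\hgrhilb \cong \spec(\bar R / \bar I)$ for $\bar R := k[T_{c+1}, \ldots, T_N]$ and $\bar I$ homogeneous in degrees $\geq 2$. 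Smoothness forces $\dim \bar R / \bar I = N - c = \dim \bar R$, and since $\bar R$ is a domain this forces $\bar I = 0$. Hence $\hgrhilb \cong \mathbb{A}^{N-c}_k$.

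The main obstacle is the middle step: extracting from the single input of Theorem \ref{thm.bb_is_grob} both the monomorphism $h$ (to get the tangent-space injection) and the local $k$-morphism bijection (to activate Lemma \ref{lem.dimensions}), while correctly handling the scheme-theoretic subtleties at the fixed point $o$. Once smoothness has been secured, the passage to an affine space is essentially formal given the strict positivity of the grading.
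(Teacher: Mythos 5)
Your proposal is correct, and its first half is essentially the paper's own argument: the paper likewise applies Lemma \ref{lem.dimensions} to the map of local rings $\mathcal{O}_{\bbhilb,o} \to \mathcal{O}_{\hgrhilb,o}$ induced by the Gr\"obner-degeneration morphism $\hgrhilb \to \bbhilb$ (with the local-hom bijections extracted from Theorem \ref{thm.bb_is_grob}), gets injectivity on tangent spaces from the fact that this morphism is a monomorphism (because its composite with $\bbhilb \to \hihilb$ is the locally closed immersion of Proposition \ref{prop.locally_closed}), and closes exactly your chain $\dim \mathcal{O}_{\bbhilb,o} \leq \dim \mathcal{O}_{\hgrhilb,o} \leq \dim_k T_o\hgrhilb \leq \dim_k T_o\bbhilb = \dim \mathcal{O}_{\bbhilb,o}$. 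Where you genuinely diverge is the endgame. The paper invokes, as a black box from \cite{FR09,RT10}, the existence of a closed embedding of $\hgrhilb$ into its Zariski tangent space $T_o\hgrhilb$, and concludes from the dimension count that this embedding is an isomorphism. You instead reprove that structural fact by hand: Proposition \ref{prop.coordinate_of_grobner_scheme} realizes $\hgrhilb$ as a $\gm$-invariant (hence homogeneously defined) closed subscheme of the strictly positively graded affine space $\spec R$, with $\proj S/J$ at the origin, and you eliminate the variables carrying linear parts of the defining ideal. This buys self-containedness -- no reliance on the embedding theorem of \cite{FR09,RT10} -- and makes visible exactly why positivity of the weights $\omega\cdot\alpha - \omega\cdot\beta$ is the operative point; the paper's route is shorter because it leans on known structure theory of Gr\"obner strata. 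Two spots in your second step are stated loosely but are repairable detail, not gaps: ``the degree-one component of $I$'' should be ``the image of $I$ in $\mathfrak{m}/\mathfrak{m}^2$'' (under the non-standard grading, literal degree-one elements of $I$ are a different object), and the elimination of $T_1,\ldots,T_c$ must be performed iteratively in increasing order of the weights, using that each homogeneous relation $T_i + g_i$ has $g_i$ involving only variables of strictly smaller weight, so the substitution terminates; also $c$ equals the codimension at $o$ only because of the regularity you established in the first step, so the order of the two steps cannot be interchanged.
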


\begin{proof}
Let $\mathcal{O}_B$ and $\mathcal{O}_{G}$ be the local rings of $\bbhilb$ at $\proj S/J \in \bbhilb(k)$ and of $\hgrhilb$ at $J \in \hgrhilb(k)$ respectively. Since the morphism $\hgrhilb \rightarrow \bbhilb$ maps $\proj S/J$ to $J$, there exists a morphism $\spec \mathcal{O}_G \rightarrow \spec \mathcal{O}_B$ that implies a bijective $l.\Hom_k (\mathcal{O}_B, K) \rightarrow l.\Hom_k (\mathcal{O}_G, K)$ for any field extension $k \subset K$ (Theorem \ref{thm.bb_is_grob}). Then $\dim \mathcal{O}_G \geq \dim \mathcal{O}_B$ by Lemma \ref{lem.dimensions}. Let $T_{G}$ be the Zariski tangent space on $\hgrhilb$ at $J$ and $T_{B}$ the Zariski tangent space on $\bbhilb$ at $\proj S/J$. We claim that the $k$-linear map $T_G \rightarrow T_B$ induced by $\hgrhilb \rightarrow \bbhilb$ is injective. Indeed, we can regard $T_G$ and $T_B$ as the subsets of $\Hom_{k} (\spec k[\varepsilon]/ \langle \varepsilon^2 \rangle, \hgrhilb)$ and $\Hom_{k}(\spec k[\varepsilon]/\langle \varepsilon^2 \rangle, \bbhilb)$ respectively \cite{Har77}, and the morphism $\hgrhilb \rightarrow \bbhilb$ is monomorphism since $J \in\mpn$. In fact, there exists a closed embedding $\hgrhilb \rightarrow T_G$ as schemes \cite{FR09,RT10}. Therefore by $\dim \mathcal{O}_G \leq \dim_k T_G \leq \dim_k T_B = \dim \mathcal{O}_B \leq \dim \mathcal{O}_G$, the closed embedding $\hgrhilb \rightarrow T_G$ is an isomorphism. 
\end{proof}

\begin{corollary}\label{cor.in_smooth_Hilb}
Assume that $\hihilb$ is smooth over $k$. Then the \gr\ scheme $\hgrhilb$ is isomorphic to an affine space for any $J \in \mpn$.
\end{corollary}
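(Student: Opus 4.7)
The plan is to chain together Theorem \ref{thm.smoothness_on_BB} and Theorem \ref{thm.smoothness_on_gr}, taking $X = \hihilb$. By hypothesis $\hihilb$ is smooth over $k$, and it is also projective (being the Hilbert scheme of a projective space), so Theorem \ref{thm.smoothness_on_BB} applies: both $(\hihilb)^{\gm}$ and $(\hihilb)^{+}$ are smooth over $k$, and the attractor map $\pi_{\hihilb} : (\hihilb)^{+} \to (\hihilb)^{\gm}$ is an affine fiber bundle.

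Next I would pass from the global smoothness of $(\hihilb)^{+}$ to the smoothness of an individual BB scheme $\bbhilb$ at the fixed point $\proj S/J$. By Proposition \ref{prop.fixed_monomial}, the underlying set of $(\hihilb)^{\gm}$ is in bijection with the finite set $\mpn$ of $k$-rational points indexed by monomial ideals. A smooth $k$-scheme whose underlying set is finite must be a disjoint union of reduced $k$-points, so the connected component of $(\hihilb)^{\gm}$ containing $\proj S/J$ is just the reduced point $\{\proj S/J\}$. Its preimage under $\pi_{\hihilb}$ is $\bbhilb$ by definition, and this preimage is simultaneously open and closed in $(\hihilb)^{+}$; being open in a smooth scheme, $\bbhilb$ is smooth over $k$, and in particular smooth at $\proj S/J \in \bbhilb(k)$.

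Finally I would invoke Theorem \ref{thm.smoothness_on_gr}: once $\bbhilb$ is smooth at $\proj S/J$, the \gr\ scheme $\hgrhilb$ is isomorphic to an affine space over $k$. Since $J \in \mpn$ was arbitrary, this gives the corollary.

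The only place requiring a little care is the middle step, namely arguing that smoothness of the total attractor $(\hihilb)^{+}$ descends to smoothness of the single BB scheme $\bbhilb$. This is painless here precisely because $(\hihilb)^{\gm}$ is zero-dimensional with isolated $k$-rational fixed points (Proposition \ref{prop.fixed_monomial}), so each $\bbhilb$ is cut out of $(\hihilb)^{+}$ by an open-and-closed condition; in a more general $\gm$-action with positive-dimensional fixed loci the passage would require an additional argument via the affine bundle $\pi_{X}$.
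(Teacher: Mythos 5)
Your proposal is correct and follows exactly the route the paper intends: the corollary is stated immediately after Theorem \ref{thm.smoothness_on_gr}, with Theorem \ref{thm.smoothness_on_BB} supplying smoothness of $(\hihilb)^{+}$, so that each $\bbhilb$, being the preimage under $\pi_{\hihilb}$ of an isolated fixed point (Proposition \ref{prop.fixed_monomial}) and hence open and closed in $(\hihilb)^{+}$, is smooth at $\proj S/J$, and Theorem \ref{thm.smoothness_on_gr} concludes. Your middle step making the open-and-closed descent explicit is precisely the detail the paper leaves implicit, and it is handled correctly.
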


We localize the assumption of Corollary \ref{cor.in_smooth_Hilb}. Namely, we show that $\hgrhilb$ is isomorphic to an affine space if $\hihilb$ is smooth at $\proj S/J \in \hihilb(k)$.

\begin{proposition}\label{prop.open_immersion}
{\rm (\cite[Proposition 5.2]{JS18}). } Let $f : X \rightarrow Y$ be a $\gm$-equivariant morphism. If $f$ is an open immersion, then the induced morphism $f^{+} : X^{+} \rightarrow Y^{+}$ is also an open immersion.
\end{proposition}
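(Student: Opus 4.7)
The plan is to identify $f^{+} : X^{+} \to Y^{+}$ as the base change of an open immersion along $\pi_Y : Y^{+} \to Y^{\gm}$. Concretely, I would show that the naturally commutative square
\[ \xymatrix{ X^{+} \ar[r]^{f^{+}} \ar[d]_{\pi_X} & Y^{+} \ar[d]^{\pi_Y} \\ X^{\gm} \ar[r]^{f^{\gm}} & Y^{\gm} } \]
is Cartesian, and that its bottom arrow is an open immersion.

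For the bottom arrow, I would use the functorial description of the fixed-point scheme. An $A$-valued point of the fibre product $X \times_Y Y^{\gm}$ is a morphism $\alpha \in X(A)$ such that $f \circ \alpha \in Y(A)$ is $\gm$-equivariant. Since $f$ is a $\gm$-equivariant monomorphism, this forces $\alpha$ itself to be $\gm$-equivariant, and the converse is automatic. Hence $X^{\gm} = X \times_Y Y^{\gm}$ as subfunctors of $Y$, so $f^{\gm}$ is a base change of the open immersion $f$ and is again an open immersion.

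For Cartesianness, given compatible data $\varphi \in Y^{+}(A)$ and $\psi \in X^{\gm}(A)$ with $\pi_Y(\varphi) = f \circ \psi$, uniqueness of a lift $\chi \in X^{+}(A)$ is immediate from $f$ being a monomorphism, so the task reduces to checking that $\varphi : \mathbb{A}^1_k \times_k \spec A \to Y$ factors through $X$. The open subscheme $\varphi^{-1}(X) \subset \mathbb{A}^1_k \times_k \spec A$ is $\gm$-invariant (as $X$ is $\gm$-stable in $Y$ by equivariance of $f$) and contains $\{0\} \times_k \spec A$ by the compatibility hypothesis.

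The main technical point is to conclude that the closed complement $Z := \varphi^{-1}(Y \setminus X)$, which is $\gm$-invariant and disjoint from $\{0\} \times_k \spec A$, must be empty. I would argue fibrewise over $\spec A$: the $\gm$-invariant closed subsets of $\mathbb{A}^1_{\kappa(p)}$ are only $\emptyset$, $\{0\}$, and the whole line, so a $\gm$-invariant closed subset not containing $0$ is empty; since $Z$ then has empty fibre over every point of $\spec A$, it is itself empty. This yields $\varphi^{-1}(X) = \mathbb{A}^1_k \times_k \spec A$, so $\varphi$ factors through $X$, establishing Cartesianness. Base change of an open immersion being an open immersion, we conclude that $f^{+}$ is an open immersion. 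The delicate step is the fibrewise triviality argument; care is needed to verify it in the relative setting over $\spec A$ rather than only on geometric points, but this follows from the standard fact that a closed subscheme with empty geometric fibres over every point of the base is the empty subscheme.
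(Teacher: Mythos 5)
The paper does not actually prove this proposition: it is imported as a black box, cited as \cite[Proposition 5.2]{JS18}, and used only once (in Proposition \ref{prop.local_smoothness_of_BB}). So there is no in-paper argument to compare against; what you have written is a self-contained proof of a quoted result. Your argument is correct, and it in fact reconstructs the strategy of \cite{JS18}, where the square
\[ \xymatrix{ X^{+} \ar[r]^{f^{+}} \ar[d]_{\pi_X} & Y^{+} \ar[d]^{\pi_Y} \\ X^{\gm} \ar[r]^{f^{\gm}} & Y^{\gm} } \]
is shown to be Cartesian for $\gm$-equivariant \emph{\'etale} morphisms; open immersions are the special case in which the formal-lifting part of the \'etale argument collapses to the elementary topological factoring you give. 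All three steps check out: (i) $X^{\gm} = X \times_Y Y^{\gm}$ uses only that $f$ is an equivariant monomorphism, since $f \circ a_X \circ (\id \times \alpha) = a_Y \circ (\id \times f\alpha) = f \circ (\alpha \circ \mathrm{pr}_2)$ and $f$ can be cancelled; (ii) for Cartesianness, the complement $Z = \varphi^{-1}(Y \setminus X)$ is an invariant closed subset of $\mathbb{A}^1_k \times_k \spec A$ disjoint from $\{0\} \times_k \spec A$, and your fibrewise analysis is sound: the orbit of any nonzero closed point of $\mathbb{A}^1_{\kappa(p)}$ is dense (valid over an arbitrary field, not just an algebraically closed one), so an invariant closed subset avoiding the origin is empty; (iii) empty fibres over every point of $\spec A$ do give $Z = \emptyset$ set-theoretically, with no further hypotheses needed.

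Two small points would tighten the write-up. First, after factoring $\varphi$ through the open subscheme $X$, you should record that the resulting map $\tilde{\varphi} : \mathbb{A}^1_k \times_k \spec A \rightarrow X$ is again $\gm$-equivariant and satisfies $\pi_X(\tilde{\varphi}) = \psi$; both follow instantly from $f$ being a monomorphism, by the same cancellation trick as in (i), but they are needed to produce the point of $X^{+}(A)$. Second, the assertion that the set-theoretic complement of a $\gm$-invariant open subset is an invariant closed subset deserves one line: if $z \in Z$ and a geometric point $(t,z)$ had image $a(t,z)$ in the open part $U$, then $z = a(t^{-1}, a(t,z))$ would lie in $U$, a contradiction. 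Neither issue is a gap in the mathematics; with these lines added, your proof is a complete and correct substitute for the citation.
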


\begin{proposition}\label{prop.local_smoothness_of_BB}
Let $x \in X^{\gm}(k)$. Assume that $\dim X^{\gm} = 0 $ and $X$ is smooth at $x$. Then the BB scheme $X^{+}_{x}$ for $x$ is smooth at $x$.  
\end{proposition}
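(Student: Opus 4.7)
The plan is to localize equivariantly to a smooth open neighborhood of $x$ and invoke Theorem \ref{thm.smoothness_on_BB}. The key tools are the $\gm$-invariance of the smooth locus of $X$ (since $\gm$ acts by automorphisms) and Proposition \ref{prop.open_immersion}, which lifts $\gm$-equivariant open immersions on $X$ to open immersions on $X^{+}$.

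Concretely, I would let $U \subset X$ be the smooth locus. Then $U$ is open (as $X$ is locally of finite type over $k$), $\gm$-invariant (as each $t \in \gm$ acts by an automorphism of $X$), and contains $x$ by hypothesis. Applying Theorem \ref{thm.smoothness_on_BB} to the smooth $\gm$-scheme $U$ shows that both $U^{\gm}$ and $U^{+}$ are smooth and that $\pi_{U} : U^{+} \to U^{\gm}$ is an affine fiber bundle. On the other hand $U^{\gm} = X^{\gm} \cap U$ is an open subscheme of the $0$-dimensional scheme $X^{\gm}$, so it is itself $0$-dimensional and locally of finite type over $k$, hence discrete. In particular $\{x\}$ is open in $U^{\gm}$, so $U_{x}^{+} = \pi_{U}^{-1}(\{x\})$ is an open subscheme of $U^{+}$ and is therefore smooth.

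To transfer smoothness back to $X_{x}^{+}$, I would apply Proposition \ref{prop.open_immersion} to the $\gm$-equivariant open immersion $U \hookrightarrow X$; this yields an open immersion $U^{+} \hookrightarrow X^{+}$. Naturality of the restriction maps $\pi_{(-)}$ produces a commutative square relating $\pi_{U}$ with $\pi_{X}$, from which one reads off the identification $U_{x}^{+} = U^{+} \cap X_{x}^{+}$ as subschemes of $X^{+}$. Hence $U_{x}^{+}$ is a smooth open subscheme of $X_{x}^{+}$ containing $x$, so $X_{x}^{+}$ is smooth at $x$.

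The only point needing real verification is the identification $U_{x}^{+} = U^{+} \cap X_{x}^{+}$. Unwinding the functorial definitions, a section of the right-hand side over $A$ is a $\gm$-equivariant morphism $\mathbb{A}^{1}_{k} \times_{k} \spec A \to X$ which factors through $U$ and restricts at $\{0\} \times_{k} \spec A$ to $x$, i.e.\ precisely a section of $U_{x}^{+}$. The $0$-dimensionality hypothesis on $X^{\gm}$ enters only to guarantee that $\{x\}$, rather than a larger piece of $U^{\gm}$, is open; no deeper obstacle is expected.
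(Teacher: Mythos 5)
Your proof is correct and takes essentially the same route as the paper's: pass to the smooth locus $U$, which is open, $\gm$-invariant and smooth, apply Proposition \ref{prop.open_immersion} to realize $U^{+}$ as an open subscheme of $X^{+}$, and deduce smoothness of $X^{+}_{x}$ at $x$ from Theorem \ref{thm.smoothness_on_BB}. The only difference is that you spell out the discreteness of $U^{\gm}$ and the identification $U_{x}^{+} = U^{+} \cap X_{x}^{+}$, which the paper leaves implicit.
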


\begin{proof}
Let $U$ be the smooth locus of $X$. Then $U$ is $\gm$-invariant, smooth and open in $X$. From Proposition \ref{prop.open_immersion}, $U^{+}$ is open in $X^{+}$. Then the BB scheme $U^{+}_{x}$ is also open in the BB scheme $X^{+}_{x}$. Therefore $X^{+}_{x}$ is smooth at $x$ by Theorem \ref{thm.smoothness_on_BB}.
\end{proof}

Therefore we obtain the following by Theorem \ref{thm.smoothness_on_gr} and Proposition\ref{prop.local_smoothness_of_BB}.

\begin{corollary}\label{cor.smoothness}
For any $J \in \mpn$, if the Hilbert scheme $\hihilb$ is smooth at $\proj S/J \in \hihilb(k)$, then the \gr\ scheme $\hgrhilb$ is isomorphic to an affine space.
\end{corollary}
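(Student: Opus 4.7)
The statement is a direct composition of the two preceding results, so my plan is to verify the hypotheses of Proposition \ref{prop.local_smoothness_of_BB} for $X = \hihilb$ and then feed its conclusion into Theorem \ref{thm.smoothness_on_gr}. There is essentially no new content to produce: the ``geometric'' part of the argument (deformation to the BB scheme, smoothness transfer through the affine fiber bundle $\pi_X$) is already encapsulated in Proposition \ref{prop.local_smoothness_of_BB}, and the ``algebraic'' part (passage from the BB scheme to the \gr\ scheme via the tangent-space embedding $\hgrhilb \hookrightarrow T_G$) is encapsulated in Theorem \ref{thm.smoothness_on_gr}.

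The first step is to check the hypothesis $\dim (\hihilb)^{\gm} = 0$ needed to apply Proposition \ref{prop.local_smoothness_of_BB}. This follows from Proposition \ref{prop.fixed_monomial}, which identifies the set-theoretic fixed locus with the finite (or at worst discrete) set $\{\proj S/J \mid J \in \mpn\}$; in particular each $\proj S/J \in \hihilb(k)$ is an isolated fixed point. The smoothness hypothesis on $\hihilb$ at $\proj S/J$ is given. Applying Proposition \ref{prop.local_smoothness_of_BB} with $x = \proj S/J$ yields that the BB scheme $(\hihilb)^{+}_{J} = \bbhilb$ is smooth at $\proj S/J \in \bbhilb(k)$.

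The second step is to invoke Theorem \ref{thm.smoothness_on_gr} directly with this $J$: since $\bbhilb$ is smooth at $\proj S/J$, the theorem tells us that $\hgrhilb$ is isomorphic to an affine space over $k$, which is the desired conclusion.

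The only mild subtlety — and the step I would watch most carefully — is making sure that Proposition \ref{prop.local_smoothness_of_BB} really does apply pointwise at $\proj S/J$ even though $(\hihilb)^{\gm}$ might have many components (one for each $J \in \mpn$); here the crucial ingredient is that $\proj S/J$ is an isolated $k$-point of the fixed locus, so that passing to the smooth locus $U \subset \hihilb$ still contains a neighborhood of $\proj S/J$ and the open immersion $U^{+}_{J} \hookrightarrow (\hihilb)^{+}_{J}$ of Proposition \ref{prop.open_immersion} has $\proj S/J$ in its image. This is immediate from the set-theoretic description of $(\hihilb)^{\gm}$ in Proposition \ref{prop.fixed_monomial}, so no serious obstacle arises and the corollary drops out in two lines.
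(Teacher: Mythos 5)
Your proof is correct and is exactly the paper's argument: the paper derives Corollary \ref{cor.smoothness} precisely by composing Proposition \ref{prop.local_smoothness_of_BB} (smoothness of $\hihilb$ at $\proj S/J$ implies smoothness of the BB scheme $\bbhilb$ at that point) with Theorem \ref{thm.smoothness_on_gr} (smoothness of $\bbhilb$ at $\proj S/J$ implies $\hgrhilb$ is an affine space). Your extra care in checking $\dim (\hihilb)^{\gm} = 0$ via Proposition \ref{prop.fixed_monomial} is a detail the paper leaves implicit, but it matches the paper's intent.
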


The converse is not true by the following example.

\begin{example}\label{ex.decomp_2t+2}
In \cite{Ser06}, Sernesi shows that the Hilbert scheme $\mathrm{Hilb}^{2t+2}_{3}$ is singular at a monomial scheme. To find other singular points, let us compute our decomposition of $\mathrm{Hilb}^{2t+2}_{3}$ with respect to the reverse lexicographic order $\prec = \prec_{rvlex}$ on $k[x,y,z,w]$ such that $x \succ y \succ z \succ w$. Then we obtain:
\begin{itemize}
\item $\#(\mathcal{M}_{2t+2,3})= 159$.
\item The $144$ monomial ideals in $\mathcal{M}_{2t+2,3}$ define smooth \gr\ schemes. The dimensions are in the Table \ref{tab.decomp_2t+2_smooth}.
\item The following $15$ monomial ideals in $\mathcal{M}_{2t+2,3}$ define singular \gr\ schemes:
\[ \begin{aligned} &J_{1} = \langle w^3, zw^2, yw^2,yzw, y^2w, y^2z, y^3, xw^2,xyw,xyz,xy^2,x^2y \rangle,\\
&J_2 = \langle w^3,zw^2,yw^2,xw^2,xzw,xz^2,xyw,xyz,x^2w,x^2z,x^2y,x^3 \rangle,\\
&J_3 = \langle w^3,zw^2,yw^2,xw^2,xzw,xyw,xyz, xy^2,x^2w,x^2z,x^2y,x^3 \rangle,\\
&J_4 = \langle zw^2, z^2w, yzw, xw^2, xzw, xz^2, xyw, xyz, x^2w, x^2z, x^2y, x^3 \rangle,\\
&J_5 = \langle z^2w, z^3, yzw, yz^2, y^2w, y^2z, y^3, xzw, xz^2, xyz, xy^2, x^2z \rangle,\\
&J_6 = \langle z^2w, z^3, yzw, yz^2, y^2w, y^2z, y^3, xz^2, xyw, xyz, xy^2, x^2y \rangle,\\
&J_7 = \langle z^2w, z^3, yzw, yz^2, y^2z, xzw, xz^2, xyw, xyz, xy^2, x^2z, x^2y \rangle,\\
&J_8 = \langle z^2w, z^3, yzw, yz^2, y^2z, xzw, xz^2, xyz, x^2w, x^2z, x^2y, x^3 \rangle,\\
&J_9 = \langle z^2w,z^3,yz^2, xzw, xz^2, xyw, xyz, xy^2, x^2w, x^2z, x^2y, x^3 \rangle,\\
&J_{10} = \langle yw^2, yzw, y^2w, y^2z, y^3, xw^2, xzw, xyw, xyz, xy^2, x^2w, x^2y \rangle,\\
&J_{11} = \langle yw^2, yzw, y^2w, xw^2, xzw, xyw, xyz, xy^2, x^2w, x^2z, x^2y, x^3 \rangle,\\
&J_{12} = \langle yzw, yz^2, y^2w, y^2z, y^3, xzw, xz^2, xyw, xyz, xy^2, x^2z, x^2y \rangle,\\
&J_{13} = \langle yzw, yz^2, y^2w, y^2z, y^3, xyw, xyz, xy^2, x^2w, x^2z, x^2y, x^3 \rangle,\\
&J_{14} = \langle yzw, yz^2, y^2z, xzw, xz^2, xyw, xyz, xy^2, x^2w, x^2z, x^2y, x^3 \rangle,\\
&J_{15} = \langle y^2w, y^2z, y^3, xzw, xz^2, xyw, xyz, xy^2, x^2w, x^2z, x^2y, x^3 \rangle.
\end{aligned} \]
\end{itemize}
Therefore $\mathrm{Hilb}^{2t+2}_{3}$ includes the $15$ singular points defined by the above $15$ monomial ideals.\\

Let us change the monomial order to the lexicographic order $\prec = \prec_{lex}$. Then:

\begin{itemize}
\item The $143$ monomial ideals in $\mathcal{M}_{2t+2,3}$ define smooth \gr\ schemes. The dimensions are in the Table \ref{tab.decomp_2t+2_smooth}.
\item The following $16$ monomial ideals in $\mathcal{M}_{2t+2,3}$ define singular \gr\ schemes:
\[ \begin{aligned} &J_{1}, J_{2}, J_3, J_4, J_5, J_6, J_7, J_9, J_{10}, J_{11}, J_{12}, J_{14}, J_{15}\ \text{and}\\
&J_{16} = \langle w^3,zw^2,yw^2,yzw,y^2w,y^2z,y^3,xw^2,xzw,xyw, xy^2,x^2w \rangle,\\
&J_{17} = \langle z^2w, z^3, yz^2, xw^2, xzw, xz^2, xyw, xyz, x^2w, x^2z, x^2y, x^3 \rangle,\\
&J_{18} = \langle y^2w, y^2z, y^3, xw^2, xzw, xyw, xyz, xy^2, x^2w, x^2z, x^2y, x^3 \rangle.
\end{aligned} \]
\end{itemize}
The consequence is that $\mathrm{Hilb}^{2t+2}_{3}$ includes the $18$ singular points defined the above $18$ monomial ideals.\\

One may suppose that this method covers all singular points in $\hihilb$ defined monomial ideals by running monomial order $\prec$ enough. However, we do not have investigated it yet.
\end{example}

\begin{table}[h]
\begin{tabular}{|c||cccccccccccc|} \hline
$m$                & 0 & 1 & 2 & 3 & 4  & 5 & 6 & 7 & 8 & 9 & 10& 11\\ \hline \hline
$\prec_{rvlex}$ & 1 & 3 & 8 &18 & 23& 24&25&20 &14 & 6&2  & 0 \\ \hline
$\prec_{lex}$    &1 & 3 & 9 & 17& 22 & 24& 23& 19& 15& 6& 3& 1\\ \hline
\end{tabular}
\caption{The numbers of $J \in \mathcal{M}_{2t+2,3}$ such that $\hgrhilb \cong \mathbb{A}^m_k$ \label{tab.decomp_2t+2_smooth}}
\end{table}

\clearpage

\section{Homology formula}

We attach a $\gm$-action to $\prnn = \proj k[z_0, \ldots, z_N]$ such that $t \cdot [z_0,\ldots, z_N] = [t^{u_0} z_0, \ldots, t^{u_N}z_N]$. Let $X$ be a $k$-scheme with a $\gm$-equivariant embedding into $\prnn$, as like as the \plu\ embedding $\hihilb \hookrightarrow \mathbf{P}$. In this setting, \bb\ shows that the family of the BB schemes $\{X_i^{+} \}$ is filtrable \cite[Theorem 3]{BB76} (note that the proof only uses the existence of a $\gm$-equivariant embedding).  Here we say that $\{ X_i^{+} \}$ is \textit{filtrable} if there exists a sequence of closed subschemes $Z_0 \supset \cdots \supset Z_{q}$ such that
\begin{itemize}
\item $Z_0 = X$, $Z_q = \emptyset$,
\item each $Z_i \setminus Z_{i+1}$ is a BB scheme.
\end{itemize}
We obtain the followings by applying \cite[Theorem 3]{BB76} to the Hilbert scheme.

\begin{proposition}\label{prop.filtrable}
The family of the BB schemes $\{ \bbhilb \}$ in the Hilbert scheme $\hihilb$ is filtrable.
\end{proposition}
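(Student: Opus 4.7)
The plan is to reduce directly to Bia{\l}ynicki-Birula's filtrability theorem \cite[Theorem 3]{BB76}, which the excerpt has already explicitly set up: any scheme $X$ admitting a $\gm$-equivariant embedding into a projective space $\prnn$ with a diagonal torus action $t \cdot [z_0, \ldots, z_N] = [t^{u_0} z_0, \ldots, t^{u_N} z_N]$ has a filtrable family of BB schemes. So the work is just to verify that $\hihilb$ fits into this framework.

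First, I would recall the chain of $\gm$-equivariant embeddings constructed earlier in the paper: the Hilbert scheme sits as a closed subscheme of the Grassmannian,
\[ \hihilb \hookrightarrow \grass, \]
and the Grassmannian embeds into $\mathbf{P} = \mathbb{P}\bigl(\wedge^{\binom{n+r}{r}-P(r)} S_r\bigr)$ via the \plu\ embedding. Both of these were shown in Section 4 to be $\gm$-equivariant with respect to the $\gm$-action induced by the weight vector $\omega$ of Proposition \ref{prop.weight}. Composing, I obtain a $\gm$-equivariant closed embedding $\hihilb \hookrightarrow \mathbf{P}$.

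Next, I would verify that the $\gm$-action on $\mathbf{P}$ is of the required diagonal form. The action on $S_r$ is $t \cdot x^\alpha = t^{-\omega \cdot \alpha} x^\alpha$, so the weight basis of $S_r$ consists of eigenvectors. The induced action on $\wedge^{\binom{n+r}{r}-P(r)} S_r$ then has a basis of eigenvectors given by wedge products of monomials of degree $r$, each with weight the negative sum of the corresponding $\omega \cdot \alpha_i$. Taking this basis as the projective coordinates $z_0, \ldots, z_N$ on $\mathbf{P}$, the $\gm$-action takes exactly the diagonal form required by \cite[Theorem 3]{BB76}.

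Finally, I would invoke \cite[Theorem 3]{BB76} directly: it yields a sequence of closed subschemes $Z_0 \supset Z_1 \supset \cdots \supset Z_q$ of $\hihilb$ with $Z_0 = \hihilb$, $Z_q = \emptyset$, and with each successive difference $Z_i \setminus Z_{i+1}$ equal to a BB scheme. By Theorem \ref{thm.bb_is_grob} and Proposition \ref{prop.fixed_monomial}, the BB schemes of $\hihilb$ are exactly the $\bbhilb$ indexed by $J \in \mpn$, so this gives the desired filtration. The main thing to be careful about is that the parenthetical note in the statement emphasizes that the proof of \cite[Theorem 3]{BB76} only requires the existence of a $\gm$-equivariant embedding and not any smoothness of $X$, which matters here because $\hihilb$ is not in general smooth; once that is pointed out, there is essentially no further obstacle.
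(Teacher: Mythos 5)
Your proposal is correct and follows essentially the same route as the paper: the paper also proves this by composing the $\gm$-equivariant embeddings $\hihilb \hookrightarrow \grass \hookrightarrow \mathbf{P}$ from Section 4 and then applying \cite[Theorem 3]{BB76}, with the same key observation that the proof of that theorem only uses the existence of a $\gm$-equivariant projective embedding and not smoothness of $\hihilb$. Your extra verification that the induced action on $\mathbf{P}$ is diagonal in the basis of wedge products of monomials is a detail the paper leaves implicit, but it is the right justification.
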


\begin{corollary}\label{cor.homology}
Assume that $k$ is algebraically closed, $\mathrm{char}\ k = 0$ and the Hilbert scheme $\hihilb$ is smooth. Denote by $p(J)$ the dimension of the Zariski tangent space of $\hgrhilb$ at $J$. Then we have the following formula about the homology of $\hihilb$: 
\[ H_m(\hihilb, \mathbb{Z}) \cong \bigoplus_{J \in \mpn} H_{m-2p(J)}(\{J\}, \mathbb{Z}) \cong \mathop{\bigoplus_{J \in \mpn}}_{2p(J) = m} \mathbb{Z} \]
for any integer $m$ with $0 \leq m \leq 2 \dim \hihilb$. 
\end{corollary}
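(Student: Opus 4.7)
The plan is to combine the smoothness hypothesis with Corollary \ref{cor.smoothness}, the filtrability in Proposition \ref{prop.filtrable}, and the standard homology computation for algebraic cell decompositions. Under the assumption that $\hihilb$ is smooth, Corollary \ref{cor.smoothness} tells us that for every $J \in \mpn$ the \gr\ scheme $\hgrhilb$ is isomorphic to an affine space over $k$, and by the proof of Theorem \ref{thm.smoothness_on_gr} it is isomorphic to its tangent space, so the dimension of that affine space is exactly $p(J)$.

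Next I would upgrade the set-theoretic equality $\bbhilb = \hgrhilb$ of Theorem \ref{thm.bb_is_grob} to a scheme-theoretic isomorphism in the smooth case. The natural morphism $\hgrhilb \to \bbhilb$ is a monomorphism (both are locally closed subschemes of $\hihilb$ and they share the same $K$-points for every field extension $k \subset K$). By Proposition \ref{prop.local_smoothness_of_BB}, $\bbhilb$ is smooth at $\proj S/J$ and by Lemma \ref{lem.dimensions} the two local rings have equal dimension. Since $\hgrhilb$ is already identified with its tangent space, the monomorphism must be an isomorphism onto its image, and since both sides are irreducible of the same dimension and $\bbhilb$ is connected (a BB scheme over a single fixed point), we conclude $\bbhilb \cong \hgrhilb \cong \mathbb{A}^{p(J)}_k$ as schemes.

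Now I invoke Proposition \ref{prop.filtrable}: there is a chain of closed subschemes
\[ \hihilb = Z_0 \supset Z_1 \supset \cdots \supset Z_q = \emptyset \]
such that each stratum $Z_i \setminus Z_{i+1}$ is a BB scheme $\bbhilb$ for some $J \in \mpn$. By the previous paragraph, each stratum is an affine space $\mathbb{A}^{p(J)}_k$. This is precisely an algebraic cell decomposition of $\hihilb$ in the sense of \cite[Example 1.9.1]{Ful98}. Since $k$ is algebraically closed of characteristic zero, we may pass to the associated complex analytic space (after choosing an embedding $k \hookrightarrow \mathbb{C}$ if needed) and regard the decomposition as a CW structure whose cells are the complex affine spaces $\mathbb{A}^{p(J)}_{\mathbb{C}}$, each contributing a single cell of real dimension $2p(J)$.

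The final step is the standard computation: for a smooth projective variety admitting such an algebraic cell decomposition, the cellular boundary maps vanish for dimension parity reasons, so $H_{\ast}(\hihilb, \mathbb{Z})$ is free abelian with one generator in degree $2p(J)$ for each $J \in \mpn$. This gives
\[ H_m(\hihilb, \mathbb{Z}) \;\cong\; \mathop{\bigoplus_{J \in \mpn}}_{2p(J)=m} \mathbb{Z}, \]
which is the asserted formula. The main obstacle is the scheme-theoretic identification $\bbhilb \cong \hgrhilb$ under the smoothness assumption; once that is secured, the rest is a direct application of filtrability and the Fulton-style homology of affine pavings.
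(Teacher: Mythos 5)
Your overall architecture is the same as the paper's: show that the strata of the filtrable decomposition of Proposition \ref{prop.filtrable} are affine spaces of dimension $p(J)$, then apply the standard homology computation for algebraic cell decompositions (you cite Fulton; the paper cites \cite{BBCM02} --- same content). The genuine gap is in the step you yourself call the main obstacle, the scheme-theoretic identification $\bbhilb \cong \hgrhilb$. From ``monomorphism which is bijective on $K$-points for every field extension'' you conclude that the map ``must be an isomorphism onto its image.'' That inference is invalid: a monomorphism need not be an immersion, and a bijective monomorphism need not be an isomorphism even when the target is reduced and irreducible. (Delete one of the two preimages of the node from the normalization of a nodal cubic; the resulting map to the cubic is unramified and universally injective, hence a monomorphism, and it is bijective, but it is not an isomorphism.) To exclude this behaviour you would need $\bbhilb$ to be irreducible and normal, and then Zariski's main theorem (or miracle flatness); but the facts you invoke --- Proposition \ref{prop.local_smoothness_of_BB} and Lemma \ref{lem.dimensions} --- give smoothness and the dimension equality only \emph{at the fixed point} $\proj S/J$, and connectedness together with smoothness at a single point does not imply irreducibility, let alone normality.

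The repair is available in the paper and in fact is what its proof does, bypassing your monomorphism argument entirely: since $\hihilb$ is smooth everywhere, Theorem \ref{thm.smoothness_on_BB} applies globally, so $(\hihilb)^{+} \rightarrow (\hihilb)^{\gm}$ is an affine fiber bundle over a smooth $0$-dimensional base; hence $\bbhilb$, being the fiber over the reduced point $\proj S/J$, is itself an affine space $\mathbb{A}^m_k$, and $m = \dim_k T_B = \dim_k T_G = p(J)$ by the chain of equalities in the proof of Theorem \ref{thm.smoothness_on_gr}. Alternatively, you could avoid any scheme-theoretic upgrade: singular homology only sees the underlying reduced structure of each stratum, so Theorem \ref{thm.bb_is_grob} together with $\hgrhilb \cong \mathbb{A}^{p(J)}_k$ (which is reduced) already identifies each stratum topologically with an affine space, which is all the cell-decomposition argument needs. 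A smaller inaccuracy at the end: an algebraic cell decomposition is not literally a CW structure (closures of cells need not be unions of cells), so ``cellular boundary maps vanish'' is not the actual argument; the standard proof, as in the reference the paper cites, is an induction on the filtration using Borel--Moore homology exact sequences.
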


\begin{proof}
From the hypothesis, $\bbhilb$ is isomorphic to the affine space $\mathbb{A}^{p(J)}_k$ for any $J \in \mpn$. Therefore we obtain the above formula by \cite[I\!I. Theorem 4.4, Corollary 4.15]{BBCM02}.
\end{proof}

\begin{example}\label{ex.2t+1}
Let $P = 2t +1$ and $n=3$. Then the Hilbert scheme $\mathrm{Hilb}^{2t+1}_{3}$ is smooth and the dimension is $8$. The numbers of monomial ideals $J \in \mathcal{M}_{2t+1,3}$ are in Table \ref{tab.betti_num_2t+1}. Therefore if $k$ is an  algebraically closed field with $\mathrm{char}\ k = 0$, the homologies $H_m = H_m(\mathrm{Hilb}^{2t+1}_{3}, \mathbb{Z})$ are the followings:
\[ H_0 = \mathbb{Z}, H_2 = \mathbb{Z}^2, H_4 = \mathbb{Z}^3, H_6 = \mathbb{Z}^4, \]
\[ H_8 = \mathbb{Z}^4, H_{10} = \mathbb{Z}^4, H_{12} = \mathbb{Z}^3, H_{14} = \mathbb{Z}^2, H_{16} = \mathbb{Z}. \]
\[ H_1 = H_3 = \cdots = H_{15} = 0. \]

\begin{table}[h]
\begin{tabular}{|c||ccccccccccc|} \hline
 $m$ & 0 & 1 & 2 & 3 & 4 & 5 & 6 & 7 & 8 & 9 & 10\\ \hline \hline
& 1 & 2 & 3 & 4& 4& 4&3 &2 &1 & & \\ \hline
\end{tabular}
\caption{The numbers of $J \in \mathcal{M}_{2t+1,3}$ such that $\hgrhilb \cong \mathbb{A}^m_k$ \label{tab.betti_num_2t+1}}
\end{table}

\end{example}

\bibliographystyle{alpha}
\bibliography{refloc}

\end{document}